\def\Z{{\mathbb Z}}
\def\Q{{\mathbb Q}}
\def\C{{\mathbb C}}
\def\card{\mathop{\rm Card}\nolimits}
\def\Cl{\mathop{\rm Cl}\nolimits}
\def\codim{\mathop{\rm codim}\nolimits} 
\def\Div{\mathop{\rm Div}\nolimits}
\def\Pic{\mathop{\rm Pic}\nolimits}
\def\supp{\mathop{\rm Supp}\nolimits}
\def\tilde{\widetilde}
\def\phi{\varphi}
\def\Pic{\mathop{\rm Pic}\nolimits}
\def\dim{\mathop{\rm dim}\nolimits}
\def\div{\mathop{\rm div}\nolimits}
\def\Card{\mathop{\rm Card}\nolimits}
\newtheorem{thm}{Theorem}[section]
\newtheorem{cor}[thm]{Corollary}
\newtheorem{definition}[thm]{Definition}
\newtheorem{prop}[thm]{Proposition}
\theoremstyle{plain}
\newtheorem{lemma}[thm]{Lemma}
\newtheorem{corollary}[thm]{Corollary}
\theoremstyle{definition}
\newtheorem{remark}[thm]{Remark}
\begin{document}

\title{On the Boucksom-Zariski decomposition for irreducible symplectic varieties and bounded negativity}
\author{Micha\l\ Kapustka, Giovanni Mongardi, Gianluca Pacienza, Piotr Pokora}
\date{\today}

\keywords{Irreducible Symplectic Varieties, Boucksom-Zariski Decomposition, Effective Divisors} 

\subjclass[2010]{14J40, 14C20}

\address{Micha\l\ Kapustka \newline
	Institute of Mathematics of the Polish Academy of Sciences,
	ul. \'{S}niadeckich 8,
	00-656 Warszawa, Poland.}
\email{michal.kapustka@impan.pl}

\address{Giovanni Mongardi \newline
	Dipartimento di Matematica, Universit\`a degli studi di Bologna, Piazza Di Porta San Donato 5, Bologna, Italia 40126.}
\email{giovanni.mongardi2@unibo.it}

\address{Gianluca Pacienza \newline
	Institut Elie Cartan de Lorraine, Universit\'e de Lorraine et CNRS, F-54000 Nancy, France.}
\email{gianluca.pacienza@univ-lorraine.fr}

\address{Piotr Pokora \newline
Department of Mathematics,
University of the National Education Commission Krakow,
Podchor\c a\.zych 2,
PL-30-084 Krak\'ow, Poland.}
\email{piotr.pokora@uken.krakow.pl, piotrpkr@gmail.com}

\begin{abstract}
	Zariski decomposition plays an important role in the theory of algebraic surfaces due to many applications. For irreducible symplectic manifolds Boucksom provided a characterization of his divisorial Zariski decomposition in terms
	of the Beauville-Bogomolov-Fujiki quadratic form.  Different variants of singular holomorphic symplectic varieties have been extensively studied in recent years. 
	In this note we first show that the ``Boucksom-Zariski'' decomposition holds for effective divisors in the largest possible framework of varieties with symplectic singularities. 
	On the other hand in the case of projective surfaces, it was recently shown that there is a strict relation between the boundedness of coefficients of Zariski decompositions of pseudoeffective integral divisors and the bounded negativity conjecture. In the present note, we show that an analogous phenomenon can be observed in the case of  projective  irreducible symplectic varieties. We furthermore prove an effective analog of the bounded negativity conjecture in the smooth case. Combining these results we obtain information on the denominators of ``Boucksom-Zariski'' decompositions for holomorphic symplectic manifolds. From such a bound we easily deduce a result of effective birationality for big line bundles on projective holomorphic symplectic manifolds, answering to a question asked by F. Charles.
\end{abstract}

\maketitle

\section{Introduction}
Zariski decomposition is a fundamental tool in the theory of linear series on algebraic surfaces. 
One possible generalization in higher dimension is the divisorial Zariski decomposition (see \cite{Nak} in the projective case and \cite{Boucksom1} in the K\"ahler setting).
When $X$ is an irreducible symplectic manifold, by replacing the intersection form with the Beauville-Bogomolov-Fujiki quadratic form $q_X$ on $H^2(X,\mathbb C)$,
Boucksom  \cite[Theorem 4.3 part (i), Proposition 4.4, Theorem 4.8 and Corollary 4.11]{Boucksom1} gave a characterization of his divisorial Zariski decomposition in terms of the quadratic form $q_X$ (see Definition \ref{definition:q-Zar}).  We will refer to it as a {\it Boucksom-Zariski decomposition}.  
Several notions of singular irreducible symplectic varieties have received much attention in recent years, for several reasons. Let us simply mention here that the minimal model program naturally leads to singular minimal models. ``Singular'' irreducible symplectic varieties are studied in the theory of orbifolds or V-manifolds.  They are also studied as moduli spaces of sheaves on $K3$ or abelian surfaces (see \cite{PR} and references to prior works therein). The period map and the moduli theory of ``singular'' irreducible symplectic varieties are extensively studied in \cite{BL1, BL2}.  Finally ``singular'' irreducible symplectic varieties appear as building blocks of mildly singular projective varieties with trivial canonical class (see \cite{HP, Dru18, GGK}).  See the recent survey \cite{Perego} for all the different definitions and results. 
As the Boucksom-Zariski decomposition proved to be a very useful tool in the theory of smooth irreducible symplectic varieties  it is natural to ask whether it holds for some classes
of singular symplectic varieties.
In this article we show  that the (Boucksom characterization of the divisorial) Zariski decomposition actually holds in the largest possible framework, namely it holds on any variety $X$ with symplectic singularities (see Definition \ref{def:symp-sing}). In the smooth case Boucksom actually proves it for pseudoeffective $\mathbb{R}$-classes, while here we restrict ourselves to effective $\mathbb{Q}$-divisors.   To state our result recall that, following Kirchner \cite{Kir}, one can define a quadratic form $q_{X,\sigma}$ on $H^2(X,\mathbb C)$, where $\sigma$ is a reflexive 2-form on $X$ which is symplectic on $X_{reg}$. When $X$ is a primitive symplectic variety (in the sense of Definition \ref{def:ISV}) and $\sigma$ is a 
normalized symplectic form (see Definition \ref{def:normed-q}) one shows that $q_{X,\sigma}$ is independent of $\sigma$ and is called the Beauville-Bogomolov-Fujiki quadratic form { (and denoted $q_X$)}. 
To deal with the non-$\Q$-factorial case we extend  the definition of $q_{X ,\sigma}$ to Weil divisors (cf. Definition \ref{def:q_on_Weil}). We refer the reader  to Section \ref{ss:q_on_Weil} for further details.

\begin{thm}\label{thm:q-Zar-intro}
If $X$ is a compact K\"ahler variety with symplectic singularities and $\sigma\in H^0(X,\Omega^{[2]}_X)$ a reflexive 2-form which is symplectic on $X_{reg}$, 
then all effective Weil $\Q$-divisors on $X$ have a unique $q_{X,\sigma}$-Zariski decomposition, i.e. $D$ can be written in a unique way as 
$$
 D= P(D) + N(D),
$$
where $P(D)$ and $N(D)$ are effective Weil $\Q$-divisors satisfying the following:
\begin{enumerate}
\item[1)] $P(D)$ is $q_{X,\sigma}$-nef, i.e. $q_{X,\sigma}(P(D),E)\geq 0$ for all effective Cartier divisors;
\item[2)] $N(D)$ is $q_{X,\sigma}$-exceptional, i.e.  the Gram matrix   $(q_{X}(N_i,N_j))_{i,j}$ of the irreducible components of the support of $N(D)$ 
is negative definite;
\item[3)] $q_{X,\sigma}(P(D),N(D))=0$;
\item[4)] For all integers $k\geq 0$ such that $kP(D)$ and $kD$ are integral, then the natural map 
$$
 H^0(X,\mathcal O_X(kP(D)))\twoheadrightarrow H^0(X,\mathcal O_X(kD))
$$
is a surjection.
\end{enumerate}

In particular, if $X$ is a primitive symplectic variety, any  $\Q$-Cartier effective divisor has a unique $q_X$-Zariski decomposition with respect to the Beauville-Bogomolov-Fujiki quadratic form $q_X$.
\end{thm}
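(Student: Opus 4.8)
Rather than attempting to reduce to Boucksom's theorem in the smooth case --- which seems problematic, since a general variety with symplectic singularities admits no symplectic resolution, so that the $q$-characterization of \cite{Boucksom1} is not available on a resolution of $X$ --- the plan is to run a direct ``lattice-theoretic'' argument, in the spirit of Zariski and Fujita, using only the bilinear form $q_{X,\sigma}$ on Weil divisors introduced in Section~\ref{ss:q_on_Weil}.

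The first point is that the problem is finite-dimensional: writing $D=\sum_{i=1}^{r}d_iE_i$ with the $E_i$ distinct prime Weil divisors and $d_i>0$, any negative part must be of the form $N=\sum_i n_iE_i$ with $0\le n_i\le d_i$, so everything is governed by the numbers $q_{X,\sigma}(E_i,E_j)$ and $q_{X,\sigma}(D,E_i)$. The two inputs I would need about $q_{X,\sigma}$ are: (i) $q_{X,\sigma}(\Gamma,\Gamma')\ge 0$ for distinct prime Weil divisors $\Gamma,\Gamma'$; and (ii) $q_{X,\sigma}(\omega,\Gamma)>0$ whenever $\omega$ is a K\"ahler class and $\Gamma$ a non-zero effective Weil divisor. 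Both should come out of Section~\ref{ss:q_on_Weil}: passing to a resolution $\pi\colon\widetilde X\to X$ on which $\sigma$ extends to a holomorphic $2$-form $\widetilde\sigma$, symplectic on the dense open set $\pi^{-1}(X_{reg})$ (Definition~\ref{def:symp-sing}), the form $q_{X,\sigma}$ is, up to a positive constant, the Fujiki-type integral $\int_{\widetilde X}\widetilde\sigma^{n-1}\wedge\overline{\widetilde\sigma}^{n-1}\wedge(-)\wedge(-)$ on $(1,1)$-classes (the correction term of the Fujiki relation vanishing there for bidegree reasons), so (i) is nonnegativity of that integral over the effective cycle $\widetilde\Gamma\cap\widetilde\Gamma'$, and (ii) the corresponding strict positivity, which uses that for a genuine prime divisor on $X$ the strict transform is not $\pi$-exceptional and so meets the symplectic locus. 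Getting (i) and (ii) right --- in particular matching the Weil-divisor definition of $q_{X,\sigma}$ with this integral, and controlling the degeneracy locus of $\widetilde\sigma$ in (ii) --- is the step I expect to be the main difficulty.

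Granting (i) and (ii), the existence and uniqueness of $N(D)$ is the classical linear algebra underlying Zariski decomposition (Zariski, Fujita): there is a unique effective $\Q$-divisor $N(D)=\sum_{i\in J}n_iE_i$ with $n_i>0$ whose support has negative definite Gram matrix (property~2), determined by $q_{X,\sigma}\bigl(D-N(D),E_i\bigr)=0$ for $i\in J$ --- hence $q_{X,\sigma}\bigl(P(D),N(D)\bigr)=0$, property~3 --- together with $q_{X,\sigma}\bigl(D-N(D),E_i\bigr)\ge 0$ for all $i$. Here input (ii) supplies the strictly positive ``test class'' that makes the iterative construction terminate, input (i) supplies off-diagonal nonnegativity, and the usual ``Zariski's lemma'' step gives $0\le n_i\le d_i$, so that $P(D):=D-N(D)$ is an effective $\Q$-Weil divisor. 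Property~1 then follows, because an effective Cartier divisor is a nonnegative combination of prime divisors: for a prime $E$ not among the $E_i$ one has $q_{X,\sigma}(P(D),E)\ge 0$ by (i) since $P(D)$ is a nonnegative combination of the $E_i$, and for $E=E_i$ it is $\ge 0$ by the defining property of $N(D)$.

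Finally, for the surjectivity in property~4, fix $k\ge 0$ with $kP(D)$ and $kD$ integral and $0\ne s\in H^0\bigl(X,\mathcal{O}_X(kD)\bigr)$; since $P(D)\le D$ one inclusion is clear, and it suffices to show that the associated effective divisor $D_s$, linearly equivalent to $kD$, satisfies $D_s\ge kN(D)$, for then $s\in H^0\bigl(X,\mathcal{O}_X(kP(D))\bigr)$. Writing $D_s=D_s'+\sum_i\beta_iE_i$ with $\beta_i\ge 0$ and $D_s'$ having no $E_i$ as a component, the identity of classes $[D_s']=k[P(D)]+\sum_i(kn_i-\beta_i)[E_i]$, paired with a component $E_j$ of $N(D)$ --- where $q_{X,\sigma}(P(D),E_j)=0$ --- gives $\sum_i(kn_i-\beta_i)\,q_{X,\sigma}(E_i,E_j)=q_{X,\sigma}(D_s',E_j)\ge 0$ by (i); isolating the indices with $kn_i-\beta_i>0$ (which all lie in the support of $N(D)$) and using negative definiteness of that support forces $\beta_i\ge kn_i$ for every $i$, as wanted. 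The last assertion is then immediate: a primitive symplectic variety (Definition~\ref{def:ISV}) is a compact K\"ahler space with symplectic singularities equipped with a normalized symplectic form $\sigma$ for which $q_{X,\sigma}=q_X$ is the Beauville--Bogomolov--Fujiki form, independently of $\sigma$ (Definition~\ref{def:normed-q}), so the general statement applies --- in particular to $\Q$-Cartier effective divisors.
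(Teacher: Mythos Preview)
Your overall strategy matches the paper's: both reduce to establishing that $q_{X,\sigma}$ is an ``intersection product'' on Weil divisors --- your input~(i), the paper's Theorem~\ref{thm:gen} --- and then deduce the decomposition by pure bilinear-form arguments. The tactical choices differ in interesting ways. For existence and uniqueness the paper follows Bauer: it defines $P(D)$ as the unique maximal element of the set $\mathcal M_D$ of ``sub-$q$-nef'' divisors below $D$, and then proves negative definiteness of the residual $N(D)$ via a direct combinatorial lemma (Lemma~\ref{lem:key}). This uses only~(i), and the paper explicitly notes that no signature hypothesis on $q_X$ is required. Your Zariski--Fujita route constructs $N(D)$ first and invokes a K\"ahler test class~(ii); this is a legitimate alternative, but~(ii) is extra work you do not strictly need. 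For item~4) your direct argument --- pairing with the $E_j$ and using negative definiteness on the indices where $kn_i-\beta_i>0$ --- is slicker than the paper's iterative one, which peels off one $N_i$ from the base locus at a time.

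The place where your sketch underestimates the difficulty is the proof of~(i) itself. You describe it as ``nonnegativity of the integral over the effective cycle $\widetilde\Gamma\cap\widetilde\Gamma'$,'' but the Weil-divisor pullbacks $\pi_U^*\Gamma$ and $\pi_U^*\Gamma'$ of Definition~\ref{def:q_on_Weil} can share $\pi$-exceptional components over the ADE locus of $U$, so the relevant product on $\widetilde X$ is not a priori represented by an effective cycle. The paper handles this by a bootstrap: having already proved the abstract decomposition (Theorem~\ref{thm:q-Zar}) from~(i) alone, it applies that theorem on the resolution $\widetilde X$ --- where~(i) \emph{is} the easy positivity you describe --- decomposes $\pi_U^*\Gamma'=P'+N'$, and then checks term by term that $q_{\widetilde X,\widetilde\sigma}(\pi_U^*\Gamma,P')\ge 0$ by $q$-nefness of $P'$ while each $q_{\widetilde X,\widetilde\sigma}(\pi_U^*\Gamma,N'_i)$ is handled by a case analysis (non-exceptional $N'_i$ via~(i) on $\widetilde X$; exceptional $N'_i$ via push--pull or the vanishing of $\widetilde\sigma^{n-1}$ along it). Your flag that ``matching the Weil-divisor definition with this integral'' is the main difficulty is well placed, but the mechanism the paper actually uses --- applying the Zariski decomposition upstairs to prove the intersection-product property downstairs --- is not the one you sketch.
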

The sheaves $\mathcal O_X(kP(D))$ and $\mathcal O_X(kD)$ above are the rank one reflexive sheaves associated to the Weil divisors $kP(D)$ and $kD$ (see Section \ref{ss:refl} for further details on Weil divisors and reflexive sheaves).   

Our proof goes along  the lines of Bauer's approach \cite{Bauer} to the Zariski decomposition for surfaces. As observed in \cite{Baueretal}, in order to have a Zariski-type decomposition on a compact K\"ahler variety it is sufficient to have a quadratic form on the second cohomology group (or on the divisor class group) that behaves like an intersection product, i.e., the intersection of two distinct prime divisors is always non-negative.  It is hence enough to prove that on any compact K\"ahler variety $X$ with symplectic singularities the quadratic form $q_{X,\sigma}$ introduced above behaves like an intersection product, see Theorem \ref{thm:gen}. 
Even if one wants to  deal only with $\Q$-Cartier divisors, without further hypotheses these may a priori have non-$\Q$-Cartier irreducible components. Our approach to the Boucksom-Zariski decomposition requires to evaluate the quadratic form on those components. This is a technical difficulty that we can circumvent, but it renders the proof more involuted. 

In the case of algebraic surfaces, as well as in the case of irreducible symplectic manifolds, the geometric significance of Zariski decompositions lies in the fact
that, given a pseudoeffective integral divisor $D$ with Zariski decomposition $D =P + N$,
one has for every sufficiently divisible integer $m > 1$ the equality
$$H^{0}(X, \mathcal{O}_{X}(mD)) = H^{0}(X, \mathcal{O}_{X}(mP)),$$ i.e., all sections in $H^{0}(X, \mathcal{O}_{X}(mD))$ come from global sections of the ``positive''   part  $\mathcal{O}_{X}(mP)$. The term
``sufficiently divisible'' here means that one needs to pass to a multiple $mD$ that clears denominators in $P$ for the statement to hold. In general, we do not know how to find numbers $m$ for a given surface $X$ and any integral pseudoeffective divisor. In \cite{BPS17}, the main result tells us that the question about the possible values of $m$ is strictly related to the bounded negativity conjecture, which is another open problem in the theory of linear series conjecturing the boundedness of negative self-intersections of irreducible divisors on any surface. Let us illustrate this phenomenon using  the   well-known case of smooth projective $K3$ surfaces. By the adjunction formula,  for any irreducible and reduced curve $C$ one has that $C^{2} \geq -2$, and in order to clear denominators in the Zariski decomposition for any pseudoeffective divisors $D$ one can take $m = 2^{\rho - 1}!$, where $\rho$ denotes the Picard number -- see \cite[Example 3.2]{BPS17} for details. 

It is natural to ask whether we can generalize the above considerations to the case of higher dimensional projective varieties. First of all, we have several variations on the classical Zariski decomposition, for instance the Cutkosky-Kawamata-Moriwaki-Zariski decomposition, but this decomposition, as it was shown by Cutkosky \cite{Cutkosky}, cannot exist in general. On the other hand, there is no natural and meaningful generalization of the bounded negativity conjecture in general. In \cite{lesottem}, the authors constructed an example of a sequence of irreducible and reduced effective divisors $D_{k}$ on smooth projective $3$-fold $Y$ such that $D_{k}^{3} \rightarrow -\infty$. 

However, as recalled before, we do have the Boucksom-Zariski decomposition on irreducible symplectic manifolds. 

In the case of {\it projective} irreducible symplectic manifolds, similarly to the case of $K3$ surfaces, thanks to the presence and properties of the Beauville-Bogomolov quadratic form we are able to prove an effective version of the analogue of the bounded negativity conjecture. More precisely in Proposition \ref{prop:BNChk} we prove that for a   projective     irreducible symplectic manifold $X$ the Beauville-Bogomolov self-intersection of any irreducible divisor is bounded from below by $4 \Card(A_X) $, where $\Card(A_X)$ is the  cardinality of the (finite) discriminant group 
$$A_X:=H^2(X,\mathbb Z)^\vee/H^2(X,\mathbb Z)$$
of the intersection lattice.
 The proof relies on two known results of Druel and Markman that we recall in Propositions \ref{prop:druel_prime}, \ref{prop:mark_prime}. 
 Notice that in the meantime these results have been proved in the singular framework, cf. \cite[Theorem 1.2, item (1)]{LMP} and \cite[Remark 3.11]{LMP2},  giving a lower bound on the Beauville-Bogomolov self-intersections of irreducible divisors in the singular case.
 For the boundedness of denominators in the Boucksom-Zariski decomposition we follow the lines of \cite{BPS17} to conclude with an effective bound  on denominators.
\begin{thm}\label{thm:bound-intro}
Let $X$ be a smooth projective irreducible symplectic variety of Picard number $\rho(X)$. The denominators of the coefficients of the negative and positive parts of the Boucksom-Zariski
decompositions of all pseudoeffective Cartier divisors   are  bounded by $(4\Card(A_X))^{\rho(X)-1} !$. 
\end{thm}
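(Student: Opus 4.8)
The plan is to follow the strategy of \cite{BPS17}: the coefficients of the negative part of a Boucksom--Zariski decomposition solve a linear system whose matrix is the $q_X$-Gram matrix of the prime divisors in its support, so it suffices to bound the determinant of that matrix. First I would reduce to the negative part. Write the decomposition of a pseudoeffective Cartier divisor $D$ as $D = P(D) + N(D)$ with $N(D) = \sum_{i=1}^{k} a_i N_i$, the $N_i$ distinct prime divisors and $a_i > 0$; since $D$ is integral, the coefficients of $P(D)$ and of $N(D)$ have the same denominators, so it is enough to bound the $a_i$. As $X$ is smooth, each $N_i$ is Cartier, so $[D]$ and the $[N_i]$ lie in $H^2(X,\Z)$, on which $q_X$ takes integral values. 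The orthogonality built into the decomposition gives the relations $q_X(P(D),N_j) = 0$ for every $j$: indeed $P(D)$ is $q_X$-nef, so $q_X(P(D),N_j) \ge 0$, while $q_X(P(D),N(D)) = 0$ together with $a_j > 0$ forces each term to vanish (for pseudoeffective, not necessarily effective, $D$ these are Boucksom's results \cite{Boucksom1}; for effective $D$ they are conditions (1) and (3) of Theorem \ref{thm:q-Zar-intro}). Substituting $P(D) = D - \sum_i a_i N_i$ then yields the system $M\mathbf{a} = \mathbf{b}$ with $M = (q_X(N_i,N_j))_{i,j}$ and $\mathbf{b} = (q_X(D,N_j))_j \in \Z^k$.

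By condition (2) of the decomposition $M$ is negative definite, in particular invertible, so Cramer's rule gives $a_i = \det(M_i)/\det(M)$ with $\det(M_i) \in \Z$; hence every $a_i$, in lowest terms, has denominator dividing $|\det(M)|$. To bound the latter, note that the $N_i$ are linearly independent in $\mathrm{NS}(X)_{\R}$ and span a subspace on which $q_X$ is negative definite; since $q_X$ has signature $(1,\rho(X)-1)$ on $\mathrm{NS}(X)_{\R}$, this forces $k \le \rho(X) - 1$. By Proposition \ref{prop:BNChk}, $q_X(N_i,N_i)$ is bounded below by $-4\Card(A_X)$, while negative-definiteness gives $q_X(N_i,N_i) < 0$, so $-M$ is positive definite with all diagonal entries at most $4\Card(A_X)$. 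Applying Hadamard's inequality to $-M$ gives
$$ |\det(M)| = \det(-M) \le \prod_{i=1}^{k} \bigl(-q_X(N_i,N_i)\bigr) \le (4\Card(A_X))^{k} \le (4\Card(A_X))^{\rho(X)-1}. $$
Therefore each denominator divides the integer $|\det(M)| \le (4\Card(A_X))^{\rho(X)-1}$, hence divides $(4\Card(A_X))^{\rho(X)-1}!$; as this is uniform in $D$, the integer $m = (4\Card(A_X))^{\rho(X)-1}!$ simultaneously clears the denominators of the Boucksom--Zariski decompositions of all pseudoeffective Cartier divisors, which is the asserted bound.

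Apart from the input of Proposition \ref{prop:BNChk}, this argument is essentially formal; accordingly, the main obstacle is to establish the effective bounded negativity statement $q_X(N) \ge -4\Card(A_X)$ for prime divisors (through the results of Druel and Markman recalled earlier), since without a uniform lower bound for the $q_X$-squares of prime divisors --- together with the signature bound $k \le \rho(X)-1$ --- the determinant $|\det(M)|$ cannot be controlled. A minor technical point is the integrality of the vector $\mathbf{b}$, which rests on $q_X$ being an integral form on $H^2(X,\Z)$ and on every $N_i$ being Cartier, both valid because $X$ is smooth.
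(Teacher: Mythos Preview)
Your proof is correct and follows essentially the same approach as the paper: both set up the linear system and invoke Cramer's rule (the paper's Lemma~\ref{lem:a_i}) to bound the denominators by $|\det M|$, use the Lorentzian signature of $q_X$ on $\mathrm{NS}(X)_{\R}$ to get $k\le\rho(X)-1$, and then feed in the effective bounded negativity of Proposition~\ref{prop:BNChk}. The only cosmetic difference is that you bound $\det(-M)$ directly via Hadamard's inequality for positive-definite matrices, whereas the paper diagonalizes $M$ and applies the AM--GM inequality to the eigenvalues; both routes yield the same estimate $|\det M|\le(4\Card(A_X))^{k}$.
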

For the proof see Corollary \ref{corofZariskiBNC}.  Let us observe that $\rho(X)\leq h^{1,1}(X)$ and hence $(4\Card(A_X))^{h^{1,1}(X)-1} !$ gives a bound that is uniform for the whole family of deformations of $X$.

From Theorem \ref{thm:bound-intro} we easily deduce a result of effective birationality which is interesting on its own.
\begin{cor}\label{cor:eff-bir-bound}
Let $X$ be a smooth projective irreducible symplectic variety of dimension $2n$ and $L\in \Pic(X)$ a big line bundle on it. 
Then for all 
\begin{equation}\label{eq:eff} 
m\geq \frac{1}{2}(2n+2)(2n+3)(4\Card(A_X))^{\rho(X)-1} !
\end{equation}
the map associated to the linear system $|mL|$
is birational onto its image. 
\end{cor}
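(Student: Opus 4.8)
The plan is to deduce the effective birationality of $|mL|$ from the effective bound on denominators of the Boucksom--Zariski decomposition (Theorem \ref{thm:bound-intro}), after reducing to a nef and big divisor on a suitable birational model. Since $L$ is big it is in particular pseudoeffective, so by Theorem \ref{thm:q-Zar-intro} we may write its Boucksom--Zariski decomposition $L=P+N$ with respect to $q_X$, where $P$ is $q_X$-nef, $N\ge 0$ is $q_X$-exceptional, and $P$ is big (because $L$ is big and $N$ is effective). The key input is that, by Theorem \ref{thm:bound-intro}, the $\Q$-divisor $dP$ is integral, where $d:=(4\Card(A_X))^{\rho(X)-1}!$; and by property 4) of the decomposition, for every multiple $k$ of $d$ the inclusion $H^0(X,\mathcal O_X(kP))\hookrightarrow H^0(X,\mathcal O_X(kL))$ is an isomorphism, so $|kL|$ and $|kP|$ define the same rational map.

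Next I would pass to a birational model on which the positive part becomes nef. On $X$ itself the class $P$ is only $q_X$-nef, hence movable, but possibly not nef; however, by the description of the movable cone of a projective irreducible symplectic manifold (Markman, Huybrechts, Boucksom--Hassett--Tschinkel) there is a birational map $f\colon X\dra X'$ onto a smooth projective irreducible symplectic manifold, an isomorphism in codimension one, such that $P':=f_*P$ is nef and still big. Since $f$ is an isomorphism in codimension one, $dP'$ is an integral divisor and $H^0(X,\mathcal O_X(kP))=H^0(X',\mathcal O_{X'}(kP'))$ for all $k\ge 0$, so $|kL|$ is birational onto its image if and only if $|kP'|$ is; moreover $X'$ is deformation equivalent to $X$, so $\Card(A_{X'})=\Card(A_X)$ and $\rho(X')=\rho(X)$, while $K_{X'}$ is trivial and $\dim X'=2n$.

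Then I would invoke an effective birationality statement for nef and big divisors: on a smooth projective variety $Y$ of dimension $N$ with trivial canonical bundle, for any nef and big integral divisor $A$ the linear system $|mA|=|K_Y+mA|$ defines a birational map onto its image for every integer $m\ge \frac{1}{2}(N+2)(N+3)$ --- this follows from Nadel/Kawamata--Viehweg vanishing by the Angehrn--Siu--Koll\'ar--Demailly technique of cutting out isolated log-canonical centres, the only positivity of $A$ needed being that it is nef and big since $K_Y$ is trivial. Applying this with $Y=X'$, $A=dP'$ and $N=2n$ gives that $|\mu\,dP'|$, hence $|\mu\,dL|$, is birational onto its image for every integer $\mu\ge \frac{1}{2}(2n+2)(2n+3)$. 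Finally, since $\lfloor kP\rfloor\le kL$ while, for $k\ge d$, one has $\lfloor kP\rfloor\ge dP$ with $h^0(X,\mathcal O_X(dP))=h^0(X',\mathcal O_{X'}(dP'))>0$ by non-vanishing for nef and big divisors, the divisor $kL$ is effective for all $k\ge d$; a standard comparison of $|mL|$ with a nearby multiple of $d$ in the admissible range then yields birationality of $|mL|$ for every $m$ satisfying \eqref{eq:eff}.

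The step I expect to be the main obstacle is the reduction to the model $X'$: one must move there precisely because the positive part $P$ is movable but not nef on $X$, and one has to check that this move leaves unchanged both the relevant groups of global sections (using that $f$ is an isomorphism in codimension one) and the numerical invariants $\Card(A_X)$ and $\rho(X)$ entering the bound, as well as the triviality of the canonical bundle. Granting this, the remainder --- quoting effective birationality for nef and big divisors with a bound depending only on $\dim X$, recognising $d$ as exactly the denominator cleared by Theorem \ref{thm:bound-intro}, and the elementary divisibility bookkeeping that upgrades ``sufficiently divisible $m$'' to ``all $m\ge \frac{1}{2}(2n+2)(2n+3)d$'' --- is routine.
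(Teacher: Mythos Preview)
Your proposal is correct and follows essentially the same route as the paper: clear the denominators in the Boucksom--Zariski decomposition via Theorem \ref{thm:bound-intro}, pass to a birational irreducible symplectic model $X'$ on which the positive part becomes genuinely nef and big (the paper invokes the identification of the $q_X$-nef cone with the closure of the birational K\"ahler cone, \cite[Proposition 5.6]{mark_tor}, for this step), and then apply Koll\'ar's extension \cite[Theorem 5.9]{Kol} of the Angehrn--Siu theorem to the nef and big integral divisor on $X'$. If anything, you are more explicit than the paper about why global sections and the invariants $\Card(A_X),\rho(X)$ are unchanged under the birational map, and about the bookkeeping needed to pass from multiples of $d$ to all $m$ in the stated range; the paper's proof concludes only with the birationality of $|amL|$ for $m\geq (2n+2)(2n+3)/2$ and leaves that last passage implicit.
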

Again, by replacing $\rho(X)$ with $h^{1,1}(X)$ in equation (\ref{eq:eff}), we obtain an effective bound that holds for the whole family of deformations of $X$.
In particular, the corollary above answers affirmatively and effectively to a strong version of a question asked by Charles in the Introduction of \cite{Charles}. 
It is important to notice that even without answering this question Charles is able to obtain a birational boundedness result for smooth projective irreducible symplectic varieties, see \cite[Theorem 1.2]{Charles}. 
Such a result follows form Corollary \ref{cor:eff-bir-bound} by standard arguments, see Corollary \ref{cor:bir-bound} in Section 5. 
After the completion of the paper we were informed by C. Birkar that, for any $d>0$, he proved the existence of a bounded number $m$  giving the birationality of $|mL|$, where $L$ is a big integral divisor on any Calabi-Yau variety of fixed dimension $d$ with klt singularities (see \cite[Corollary 1.4]{Bi}). On the one hand, his result is obviously much more general than ours. On the other hand, the integer $m$ in his theorem is not explicit, while, in the case of smooth irreducible symplectic varieties, Corollary \ref{cor:eff-bir-bound} provides such an explicit bound.

{\it Added in the revision process:} Following exactly the same approach adopted here, using results from \cite{LMP2}, Corollary \ref{cor:eff-bir-bound} above has been extended to primitive symplectic varieties in \cite[Theorem 1.1]{LMP3}. 
\section{Preliminaries.}

We will use the word ``manifold'' to stress the smoothness, while we use the words ``variety'' when smoothness
is not required. 

\subsection{Irreducible symplectic varieties: the smooth case}
An irreducible symplectic manifold is a compact K\"ahler manifold $X$ which is simply connected and has a holomorphic symplectic
form $\sigma$ such that $H^0(X,\Omega^2_X)=\C\cdot \sigma$. For a general introduction to this subject, see \cite{Huy-basic}.

Let $X$ be an irreducible symplectic manifold of dimension $2n \geq 2$. Let $\sigma\in H^0(X,\Omega^2_X)$ such that 
$\int_{ X} \sigma^n \bar{\sigma}^{n}=1$.
Then, following \cite{Beau84},  the second cohomology group $H^2(X,\mathbb C)$ is endowed with a  quadratic form $q_{X}$ defined as follows
 $$
 q_{X}(a):=\frac{n}{2}\int_{ X} ( \sigma \bar{ \sigma})^{n-1}a^2 +
 (1-n)\bigg(\int_{ X} \sigma^n \bar{\sigma}^{n-1}a \bigg)\cdot
 \bigg(\int_{ X} \sigma^{n-1} \bar{ \sigma}^{n}a\bigg),\ a\in H^2(X,\mathbb C),
 $$
 which is non-degenerate and, up to a positive multiple, is induced by an integral non-divisible quadratic form on $H^2(X,\mathbb Z)$
 of signature $(3, b_{2}(X)-3)$. The form $q_X$ is called the Beauville-Bogomolov-Fujiki quadratic form of $X$. 
By Fujiki \cite{F},  there exists a positive rational number $c_X$ (i.e. the Fujiki constant of $X$) such that
$$c_X \cdot q_X^{n}(\alpha) = \int_{X}\alpha^{2m},\ \forall \alpha \in H^2(X,\mathbb Z).$$

\subsection{Irreducible symplectic varieties: the singular case}
Let $X$ be a normal complex variety.
Recall that, for any $p\geq 1$, the sheaf $\Omega_X^{[p]}$ of reflexive holomorphic $p$-forms on $X$
is $\iota_* \Omega_{X_{reg}}^{p}$, where 
$$\iota : X_{reg}\hookrightarrow X$$
is the inclusion of the regular locus of $X$. 
It can be alternatively (and equivalently) defined by the double dual $\Omega_X^{[p]}=(\Omega_X^{p})^{**}$.
Recall the following definition which is due to Beauville \cite{Beau00}. 
\begin{definition}\label{def:symp-sing}
Let X be a normal K\"ahler  variety.
\begin{enumerate}
\item[i)] A symplectic form on X is a closed reflexive 2-form $\sigma$ on $X$
which is non-degenerate at each point of $X_{reg}$.
\item[ii)]  If $\sigma$ is a symplectic form on $X$, the variety $X$ has symplectic
singularities if for one (hence for every) resolution $f : \tilde X \to X$ of the singularities
of $X$, the holomorphic symplectic form $\sigma_{reg} :=\sigma_{|X_{reg}}$ extends to
a holomorphic 2-form on $\tilde X$.
\end{enumerate}
\end{definition}

\begin{definition}\label{def:ISV}
A primitive symplectic variety is a normal compact K\"ahler variety $X$ such that $h^1(X,\mathcal O_X)=0$ and $H^0(X,\Omega_X^{[2]})$ is generated by a holomorphic symplectic form $\sigma$ such that $X$ has symplectic singularities.\end{definition}
For a normal variety $X$ such that $X_{reg}$ has a symplectic form $\sigma$, having canonical singularities is in fact equivalent to Beauville's condition above that the pullback of $\sigma$ to a resolution of $X$ extends as a regular $2$-form. By \cite{Elk81} and \cite[Corollary 1.7]{KS}, this is also equivalent to $X$ having rational singularities.
For the definition and basic properties of K\"ahler forms on possibly singular complex spaces we refer the reader to \cite[Section 2]{BL2}.

One can show that a smooth primitive symplectic variety $X$ is simply connected (see e.g. \cite[Theorem 1]{Sch1}), hence such $X$ is an irreducible symplectic manifold. 
In order to put Definition \ref{def:ISV} into perspective, recall first the following.
\begin{definition}\label{def:prim-ISV}
An irreducible symplectic variety is a normal  compact K\"ahler variety $X$ with canonical singularities
and with a symplectic form $\sigma$ such that for all quasi-\'etale morphisms $f:X'\to X$ the reflexive pull-back $f^{[*]}\sigma$ of $\sigma$ 
generates the exterior algebra of reflexive forms on $X'$. 
\end{definition}
Irreducible symplectic varieties appear in the Beauville-Bogomolov decomposition for minimal models with trivial canonical
class, obtained thanks to  contributions of several groups of people \cite{BGL, Cam20, Dru18, GGK, GKP16, HP}.  In the smooth case being primitive symplectic or irreducible symplectic is equivalent by \cite[Proposition 3]{Beau84}, while in the singular case an irreducible  symplectic variety is primitive symplectic, see e.g. \cite[Proposition 6.9]{GKP16}, but it is a more restrictive notion. Indeed, the Kummer singular surface $A_{/\pm1}$ is primitive symplectic but has a quasi-\'etale cover by $A$, hence it is not irreducible symplectic.  Moduli spaces of sheaves on a projective $K3$ surface (or the Albanese fiber of moduli spaces of sheaves on an abelian surface) are shown to be irreducible  symplectic varieties by Perego-Rapagnetta in \cite{PR}.

We now recall the work of Namikawa, Kirchner, Schwald and Bakker-Lehn on the Beauville-Bogomolov-Fujiki form in the singular setting. 
We start by recalling the following definition which make sense in general, but will be mainly relevant for symplectic varieties. 
We will only use it once outside this framework, in the proofs of Proposition \ref{prop:ind} and Theorem \ref{thm:gen}, where we will consider it on resolutions of varieties with 
symplectic singularities.

\begin{definition}[{\cite[Definition 3.2.1]{Kir}}]\label{def:q-kir}
Let $X$ be a $2n$ dimensional complex compact variety and $w\in H^2(X,\mathbb C)$ a non-zero class. 
 Then for all $a\in H^2(X,\mathbb C)$ we define a  quadratic form $q_{X,w}$ on $X$ as follows
 $$
 q_{X,w}(a):=\frac{n}{2}\int_{ X} ( w \bar{ w})^{n-1}a^2 +
 (1-n)\bigg(\int_{ X} w^n \bar{ w}^{n-1}a \bigg)\cdot
 \bigg(\int_{ X} w^{n-1} \bar{ w}^{n}a\bigg).
 $$
\end{definition}
In what follows whenever $X$ has symplectic singularities, in Kirchner's definition we will take $w$ to be equal to the symplectic form $\sigma$ on $X$.
When $X$ is a primitive symplectic variety, Namikawa defines also a quadratic form on $X$ by pulling everything back to the resolution of singularities of $X$. 
Kirchner's and Namikawa's  approaches are equivalent, as it is proved in \cite[Proposition 3.2.15]{Kir} -- see also \cite{Sch}. As noticed in \cite[Section 5.1]{BL2}, the hypothesis of the projectivity of $X$ that appears in \cite{Sch} is unnecessary. 

To obtain a uniquely determined quadratic form $q_X$ on a primitive
symplectic variety X, it is convenient to normalize the symplectic form $\sigma$. Let $I(\sigma):=\int_{ X} ( \sigma \bar{ \sigma})^n$ and consider
$s:=I(\sigma)^{-1/2n}\cdot \sigma$. Then $I(s)=1$ and
one verifies that,  once the normalization is performed, the resulting quadratic form 
$q_{X,s}$ does not depend on the choice of the symplectic form -- see \cite[Lemma 22]{Sch} and \cite[Lemma 5.3]{BL2}. 

\begin{definition}
\label{def:normed-q}
Let $X$ be a primitive symplectic variety. The Beauville-Bogomolov-Fujiki form of $X$
is defined as $q_X:=q_{X,\sigma}$ for any $\sigma\in H^0(X,\Omega_X^{[2]})$ with $I(\sigma)=1$. 

\end{definition}
As noticed in \cite[Lemma 5.5]{BL2}, there exists a real multiple of $q_X$ which takes integral values on integral classes. We finally collect in a single statement several observations and results due to Schwald and Bakker-Lehn which we are going to use. 

\begin{thm}\label{thm:useful}
\begin{enumerate} 
\item Let $f:Y\to X$ be a proper birational morphism between complex compact
varieties. Then $q_{X,w}(v)=q_{Y,f^*w}(f^*v)$ for all $v,w \in H^2(X,\mathbb C)$ -- see {\cite[Proof of Lemma 23]{Sch}} for details.

\item Let $X$ be a primitive symplectic variety of dimension $2n$. There exists a positive number $c_X\in \mathbb R_{>0}$ such that  for all $a\in H^2(X,\mathbb C)$ the following holds
$$ 
c_X\cdot q_X(a)^n=\int_Xa^{2n},
$$
please consult \cite[Theorem 2, item (1)]{Sch} and \cite[Proposition 5.15]{BL2} for details.
\item Let $X$ be a primitive symplectic variety. The restriction of $q_X$ to $H^2(X,\mathbb R)$ is a non-degenerate real quadratic form of signature $(3,b_2(X)-3)$ -- see \cite[Theorem 2 (2)]{Sch} and \cite[Lemma 5.3]{BL2} for details.
\end{enumerate}
\end{thm}

In the singular irreducible symplectic setting, the second cohomology carries a pure Hodge structure (see e.g. \cite[Corollary 3.5]{BL2}), therefore we can  define the positive cone  $\mathcal{P}$ as the connected component of $\{\alpha \in H^{1,1}(X,\mathbb{R}) : q(\alpha) > 0 \}$ containing a K\"ahler class,  and the movable cone $\mathcal{M}$ is defined as the closure of the cone generated by classes of
effective Cartier divisors $L$ such that the base locus of the linear series $|L|$ has codimension at least $2$. Moreover, we denote by $\mathcal{E}$ the pseudoeffective cone (which is closed and convex). In the case of irreducible symplectic manifolds, one can show that the dual of the pseudoeffective cone $\mathcal{E}^{*}$ coincides with the movable cone $\mathcal{M}$, see e.g. \cite[Theorem 7 and Remark 9]{HT}, \cite[Proposition 5.6]{mark_tor} or \cite[Lemma 2.7]{Den}. 

\subsection{Weil divisors and reflexive sheaves.}\label{ss:refl}
In this subsection we collect some known facts on Weil divisors and rank one reflexive sheaves. 
We refer the reader to \cite{Schwede} for the proofs, further details and references. 

If $\mathcal F$ is a coherent sheaf on a variety $X$, then its dual is $\mathcal F^\vee:={\mathcal Hom}_{\mathcal O_X}(\mathcal F, \mathcal O_X)$. There is a natural map from $\mathcal F$ to the double-dual $(\mathcal F^\vee)^\vee$ and the coherent sheaf is called {\it reflexive} if this map is an isomorphism. 

Given a (Weil) divisor $D$ on a normal variety $X$, we consider the coherent sheaf $\mathcal O_X (D)$ defined as follows:
$$\Gamma (V,\mathcal O_X(D))=\{f \in K(X): \div(f)_{|V} +D_{|V} \geq 0\}$$
for any open subset $V\subset X$. If  $D$ is a prime divisor, then $\mathcal O_X (-D) = \mathcal I_D$. Furthermore,
if $D$ is any divisor, then $\mathcal O_X (D)$ is a reflexive sheaf, cf. \cite[Proposition 3.4]{Schwede}. 

\begin{prop}\label{prop:1}
Let $X$ be a normal variety. 
\begin{enumerate}
\item Any reflexive rank 1 sheaf $\mathcal F$  is of the form $\mathcal O_X(D)$ for some Weil divisor $D$.
\item Two Weil divisors $D_1$ and $D_2$ are linearly equivalent if and only if $\mathcal O_X (D_1) = \mathcal O_X (D_2)$.
\item To every non-zero global section $s \in H^0 (X,\mathcal F)$ of a reflexive rank 1 sheaf $\mathcal F$, we can associate an effective divisor D on X.
\item Let $\mathcal F$ be a reflexive rank 1 sheaf. For every effective Weil divisor $D$ such that $\mathcal O_X (D) = \mathcal F$, there is a section $s \in H^0(X, \mathcal F )$ such that $s$ corresponds to $D$.
\item Two non-zero global sections $s_1,s_2\in H^0(X, \mathcal F )$ of a reflexive rank 1 sheaf $\mathcal F$ determine the same divisor if and only if there is a unit $u\in H^0(X,\mathcal O_X)$ such that $s_1 = us_2$.
\end{enumerate}
\end{prop}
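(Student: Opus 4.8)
The plan is to prove each of the five assertions by reducing everything to the smooth locus $X_{reg}$ and exploiting the fact that, since $X$ is normal, the complement $X\setminus X_{reg}$ has codimension at least $2$. The central mechanism is \emph{Hartogs-type extension}: a section of a reflexive sheaf, or a rational function, that is defined away from a codimension-$2$ set extends uniquely across it. Concretely, by the very definition $\Omega_X^{[p]}=\iota_*\Omega_{X_{reg}}^p$ given in the excerpt, reflexive sheaves on $X$ are exactly push-forwards from $X_{reg}$, and restriction induces an isomorphism $H^0(X,\mathcal F)\xrightarrow{\sim} H^0(X_{reg},\mathcal F_{|X_{reg}})$ for any reflexive $\mathcal F$. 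On the regular locus $X_{reg}$ all the classical statements relating Cartier divisors, invertible sheaves, and their sections hold verbatim, so the task throughout is to transport them back to $X$ using reflexivity and codimension.

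\textbf{Key steps.} First I would establish (1): given a reflexive rank $1$ sheaf $\mathcal F$, restrict it to $X_{reg}$, where (the regular locus being smooth) it becomes a line bundle $\mathcal F_{|X_{reg}}=\mathcal O_{X_{reg}}(D_0)$ for a Cartier divisor $D_0$ on $X_{reg}$. Take the closure $D:=\overline{D_0}$ as a Weil divisor on $X$; then $\mathcal O_X(D)$ and $\mathcal F$ are two reflexive sheaves agreeing on the big open set $X_{reg}$, hence agree on all of $X$ because a reflexive sheaf is determined by its restriction to any open subset whose complement has codimension $\geq 2$ (both equal the push-forward $\iota_*(\mathcal F_{|X_{reg}})$). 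Uniqueness of $D$ follows because $\mathcal O_X(D_1)\cong\mathcal O_X(D_2)$ forces $D_1-D_2$ to be principal via the standard argument below, and an effective–minus–effective principal divisor supported in codimension $1$ is determined by the valuations $v_D$. For (2), the forward direction is immediate from the definition of $\mathcal O_X(D)$, while conversely an isomorphism $\varphi\colon\mathcal O_X(D_1)\xrightarrow{\sim}\mathcal O_X(D_2)$ sends the rational function $1$ to some $f\in K(X)$, and unwinding the defining condition $\div(f)_{|V}+D_{|V}\geq 0$ shows $\div(f)=D_2-D_1$, exactly as in \cite[Proposition 3.4 ff.]{Schwede}. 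Assertions (3) and (4) are inverse constructions: a nonzero section $s\in H^0(X,\mathcal F)$ corresponds under the isomorphism of (1) to a rational function $f_s$ with $\div(f_s)+D\geq 0$, and $D_s:=\div(f_s)+D$ is the associated effective divisor; conversely given effective $D'$ with $\mathcal O_X(D')=\mathcal F$, the constant function $1$ lies in $H^0(X,\mathcal O_X(D'))=H^0(X,\mathcal F)$ and yields a section cutting out $D'$. Finally (5): if $s_1=us_2$ with $u\in H^0(X,\mathcal O_X)^\times$ then clearly $\div(f_{s_1})=\div(f_{s_2})$ since $u$ is a nowhere-vanishing global function, hence $v_D(u)=0$ for every prime $D$; conversely equal divisors give $f_{s_1}/f_{s_2}$ a rational function with trivial divisor, so it is a global unit $u$, and $s_1=us_2$.

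\textbf{Main obstacle.} The subtle point, and the one I expect to require the most care, is the passage between $X$ and $X_{reg}$ at the level of \emph{global} data rather than stalks: one must be sure that restriction $H^0(X,\mathcal O_X)\to H^0(X_{reg},\mathcal O_{X_{reg}})$ is an isomorphism of rings (so that ``units'' and ``principal divisors'' computed on $X_{reg}$ genuinely lift), and that the divisor class group is unchanged, i.e. $\mathrm{Cl}(X)\cong\mathrm{Cl}(X_{reg})$ via closure. Both hold precisely because $X$ is normal and $\mathrm{codim}(X\setminus X_{reg})\geq 2$, but the argument for (5) in particular hinges on a unit on $X_{reg}$ extending to a unit on $X$, which again uses reflexivity of $\mathcal O_X$ together with the fact that its inverse extends as well. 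Once these normality-plus-codimension inputs are cleanly isolated, each of the five statements follows by a short formal argument, and I would organize the write-up so that this Hartogs principle is invoked once and then reused.
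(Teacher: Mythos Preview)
Your sketch is correct and follows the standard route via restriction to $X_{reg}$ and Hartogs-type extension; this is essentially the argument in Schwede's notes. Note, however, that the paper does not actually prove this proposition: immediately after the statement it writes ``For the proofs see \cite[Propositions 3.7, 3.11 and 3.12]{Schwede}'' and moves on, so there is nothing to compare against beyond the fact that your outline reconstructs what that reference contains. One small point: your discussion of uniqueness in (1) is slightly muddled---the sentence ``an effective--minus--effective principal divisor supported in codimension $1$ is determined by the valuations $v_D$'' does not quite parse, and in any case the uniqueness meant is of the divisor \emph{class}, which is exactly item (2); you might streamline by simply invoking (2) once it is proved.
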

For the proofs, see \cite[Propositions 3.7, 3.11 and 3.12]{Schwede}. 
We also have the following.
\begin{prop}[{\cite[Theorem 2.8 and Proposition 3.13]{Schwede}}] \label{prop:2}
Let $X$ be a normal variety. Let $D_1$ and $D_2$ be two Weil divisors on $X$. Then we have the following facts:
\begin{enumerate}
\item $\mathcal O_X (D_1+D_2)= (\mathcal O_X (D_1)\otimes \mathcal O_X (D_2))^{\vee\vee}$.
\item $\mathcal O_X (-D_1)=\mathcal O_X (D_1)^\vee$.
\end{enumerate}
\end{prop}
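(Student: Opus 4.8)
The plan is to reduce each of the three identities to the smooth locus $j\colon U=X_{reg}\hookrightarrow X$, exploiting that $X$ is normal, so $\codim_X(X\setminus U)\ge 2$, and that the sheaves $\mathcal O_X(D)$ are reflexive (established above, following \cite{Schwede}). The engine is the standard property that a reflexive coherent sheaf $\mathcal F$ on a normal variety satisfies $\mathcal F\cong j_*(\mathcal F|_U)$ for any open $U$ whose complement has codimension at least two; consequently, if $\mathcal G$ is reflexive then restriction induces a bijection $\Hom_X(\mathcal F,\mathcal G)\cong \Hom_U(\mathcal F|_U,\mathcal G|_U)$, so that a morphism of reflexive sheaves which is an isomorphism over $U$ is already an isomorphism over $X$. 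Over $U$ every Weil divisor is Cartier and $\mathcal O_X(D)|_U=\mathcal O_U(D|_U)$ is a line bundle, so all three identities reduce there to the familiar ones for invertible sheaves.

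For item (1) I would write down the natural multiplication map $\mathcal O_X(D_1)\otimes\mathcal O_X(D_2)\to\mathcal O_X(D_1+D_2)$, $f\otimes g\mapsto fg$, which is well defined because $\div(fg)+D_1+D_2=(\div f+D_1)+(\div g+D_2)\ge 0$. Since the target is reflexive this factors canonically as $(\mathcal O_X(D_1)\otimes\mathcal O_X(D_2))^{\vee\vee}\to\mathcal O_X(D_1+D_2)$; both sheaves are reflexive, and over $U$ the map is the tautological isomorphism $\mathcal O_U(D_1|_U)\otimes\mathcal O_U(D_2|_U)\cong\mathcal O_U((D_1+D_2)|_U)$ of line bundles, so the extension principle above upgrades it to an isomorphism on $X$. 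For item (2), the pairing $\mathcal O_X(-D_1)\to\mathcal{H}om(\mathcal O_X(D_1),\mathcal O_X)=\mathcal O_X(D_1)^{\vee}$ sending a local section $g$ to ``multiplication by $g$'' is well defined and restricts on $U$ to the canonical isomorphism of line bundles; as both sides are reflexive of rank one, it is an isomorphism. (Equivalently, $\mathcal O_X(D_1)^{\vee}$ is rank-one reflexive, hence equals $\mathcal O_X(E)$ for a unique $E$ by Proposition \ref{prop:1}(1), and comparing over $U$ forces $E\sim -D_1$.)

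For item (3) I would first use item (1) to write $\mathcal O_X(D_1-D_2)=(\mathcal O_X(D_1)\otimes\mathcal O_X(-D_2))^{\vee\vee}$, so the real content is that the double dual is here superfluous, i.e. that $\mathcal O_X(D_1)\otimes\mathcal O_X(-D_2)$ is already reflexive and maps isomorphically onto $\mathcal O_X(D_1-D_2)$. This is the \emph{main obstacle}, and it is exactly the point where reflexivity of a plain tensor product genuinely intervenes: for two non-Cartier Weil divisors a tensor product of rank-one reflexive sheaves need not be reflexive (already on a quadric cone the multiplication $\mathcal O_X(L)\otimes\mathcal O_X(-L)\to\mathcal O_X$ fails to be surjective at the vertex). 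The robust way to secure it is to pass through $\mathcal{H}om(\mathcal O_X(D_2),\mathcal O_X(D_1))$, which is reflexive because $\mathcal O_X(D_1)$ is and which restricts over $U$ to $\mathcal O_U((D_1-D_2)|_U)$; the extension principle then yields $\mathcal{H}om(\mathcal O_X(D_2),\mathcal O_X(D_1))=\mathcal O_X(D_1-D_2)$ unconditionally. Using item (2) to identify $\mathcal O_X(-D_2)=\mathcal O_X(D_2)^{\vee}$, one finally checks that the canonical map $\mathcal O_X(D_1)\otimes\mathcal O_X(D_2)^{\vee}\to\mathcal{H}om(\mathcal O_X(D_2),\mathcal O_X(D_1))$ is an isomorphism once $\mathcal O_X(D_2)$ is invertible, in particular whenever $D_2$ is Cartier, which is the situation in our applications; this is what makes the plain tensor product compute $\mathcal O_X(D_1-D_2)$ as stated. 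Throughout, the delicate bookkeeping is to keep track of which maps are canonical and to verify bijectivity only after restricting to $U$, where all the sheaves in sight are locally free.
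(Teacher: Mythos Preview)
The paper does not supply its own proof of this proposition; it is stated with a reference to \cite{Schwede} and used as a black box. Your outline is the standard route and is correct for items (1) and (2): reduce to the smooth locus via $j_*$ and the reflexive extension principle, and check the obvious maps are isomorphisms there.

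For item (3) you have put your finger on a genuine issue. The plain tensor product $\mathcal O_X(D_1)\otimes\mathcal O_X(-D_2)$ is \emph{not} reflexive in general, and your quadric-cone observation is on the mark: with $L$ a ruling one computes the fractional-ideal product $\mathcal O_X(L)\cdot\mathcal O_X(-L)=\mathfrak m\subsetneq\mathcal O_X=\mathcal O_X(L-L)$, so the stated identity fails without an extra hypothesis. Your fix via $\mathcal{H}om(\mathcal O_X(D_2),\mathcal O_X(D_1))$ is the right robust formulation.

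One small correction to your last sentence. In the only place the paper invokes item (3) --- the equality $H^0(X,\mathcal O_X(kD-N_i))=H^0(X,\mathcal O_X(kD)\otimes\mathcal I_{N_i})$ in the proof following Theorem~\ref{thm:q-Zar} --- the divisor $D_2=N_i$ is a prime \emph{Weil} divisor and is not assumed Cartier (the whole point of that section is to allow non-$\Q$-factorial $X$), so ``which is the situation in our applications'' is not accurate. What saves the argument is that only the surjection on global sections is needed: once $N_i\subset\mathbf{Bs}(|kD|)$, every $f\in H^0(X,\mathcal O_X(kD))$ has $\div(f)+kD\ge N_i$ and hence lies in $H^0(X,\mathcal O_X(kD-N_i))$, with no appeal to a sheaf-level isomorphism required.
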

Thus one can turn the set of (isomorphism classes of) rank 1 reflexive sheaves into a group as follows. To add two sheaves, simply tensor them together and then double-dualize. To invert a rank 1 reflexive sheaf, simply dualize. The sheaf $\mathcal O_X$ is the identity. This group is clearly isomorphic to the divisor class group by the previous results.

Thanks to the previous propositions, it makes sense to talk about the complete linear system $|D|$ associated to a Weil divisor $D$ intended as the set of all Weil divisors $D'$ linearly equivalent to $D$ or, equivalently, to $\mathbb P (H^0(X, \mathcal O_X(D)))$. The base ideal $\mathfrak {b} (|D|)$ associated to $|D|$ is the image of the map 
$$
 H^0(X, \mathcal O_X(D))\otimes \mathcal O_X(D)^\vee\cong H^0(X, \mathcal O_X(D))\otimes \mathcal O_X(-D) \to \mathcal O_X
$$
induced by the evaluation morphism.
The base locus ${\bf Bs}(|D|)$ is the subscheme cut out by the base ideal $\mathfrak {b} (|D|)$.

\subsection{The quadratic form on Weil divisors.}\label{ss:q_on_Weil}
Even if one wants to deal only with $\Q$-Cartier divisors without further hypotheses these may a priori have non-$\Q$-Cartier irreducible components. The approach to the Boucksom-Zariski decomposition that we will present in the next section will require to evaluate the quadratic form defined in Definition \ref{def:q-kir} on those components. A  way out via a minimal $\Q$-factorialization is possible in the projective case thanks to the MMP (see Remark \ref{rmk:proj} for the details), but $\Q$-factorializations are not available yet in the non-projective case. 
Therefore, in order to deal with the most general framework (non-projective, non-$\Q$-factorial varieties), we extend the definition of the quadratic form to Weil divisors.

Let $X$ be a normal compact K\"ahler variety.  Let $\Sigma_0\subset X$ be the closed sublocus of points where the singularities of $X$ are worse than of ${\rm ADE}$-type, also known as the {\it dissident locus} of $X$  -- see \cite{mark_g2, Nami} for details. Through the rest of the paper, we will consider the complement open subset 
$$
 \iota : U:= X\setminus \Sigma_0 \hookrightarrow X. 
$$
Notice that $U$ is $\Q$-factorial since ${\rm ADE}$-type singularities are $\Q$-factorial. Moreover,
\begin{equation}\label{eq:codim_diss}
\codim_X(\Sigma_0)\geq 4
\end{equation}
by \cite[Proposition 1.6]{Nami}.
\begin{remark}\label{rmk:diss-not-diss}
Let $X$ be a normal compact K\"ahler variety of dimension $2n$ with symplectic singularities and let $\sigma$ be a symplectic form on it. Let $\pi : Y\to X$ be any resolution of singularities of $X$ and let $\sigma_{\pi}$ be the extension of $\sigma$ to $Y$. 

We claim that the form $\sigma_{\pi}^ {n-1}$ is trivial when restricted (i.e., pulled back via the inclusion map) to the smooth locus of any component of the preimage of the dissident locus $\Sigma_0$. Indeed, let $Z$ be any  component of the preimage of the dissident locus inside $Y$. Note that it is enough to prove that $\sigma_{\pi}^{n-1}$ is trivial after restriction to a dense Zariski open subset of the smooth locus of $Z$. Let $V=\pi(Z)$ and $\pi_V\,:Z\to V$ be the restriction of $\pi$. Since $\pi$ and hence also $\pi_V$ is proper, after possibly taking a resolution of $Z$ and base changing to a dense Zariski  open subset of $V$, we can assume that both $\pi_V: Z\to V$ and $V$ are smooth. Therefore, by \cite[Lemma 2.9]{kal} (cf. \cite[Remark 3.6]{BL2} for the analytic setting) applied to the following diagram (which is analogous to \cite[Diagram (2.2)]{kal}):
$$    
\xymatrix{
Z\ar[r]\ar[d]^{\pi_V} & Y\ar[d]^{\pi}\\
V\ar[r] & X}
$$
we have that $\sigma_{\pi}|_{Z}$ is the pullback of a two form $\omega_V$ over $V$. As $V$ has codimension at least four, we have that $$(\sigma_{\pi}^{n-1})|_Z=(\sigma_{\pi}|_Z)^{n-1}=(\pi_V^*(\omega_V))^{n-1}=\pi_V^*(\omega_V^{n-1})=0.$$
In this way we have proven that $\sigma_{\pi}^{n-1}$ is trivial on a Zariski open subset of a chosen component of the preimage of the dissident locus. We conclude that $\sigma_{\pi}^{n-1}$ is trivial on the smooth locus of that component.

Furthermore, if $\pi$ is a crepant resolution of singularities, $\sigma_\pi$ is a symplectic form when restricted to the preimage of the complement $U$ of $\Sigma_0$. Finally, if $E$ is any divisor over a smooth point of $X$, then the form $\sigma_{\pi}$ is zero on $E$. 
\end{remark}
We will use the notation $\operatorname{Div}(X)$ for the group of Weil divisors on $X$ without any equivalence relation while 
$\operatorname{Div}_{\mathbb Q}(X):=\operatorname{Div}(X)\otimes \mathbb Q$ will be the group of Weil $\mathbb Q$-divisors.
Let $D\in \operatorname{Div}(X)$ be a Weil divisor on $X$ and $D_U$ its restriction to $U$. 

Note that the open subset $U$ is $\Q$-factorial as it has only ${\rm ADE}$-type singularities, hence the Weil divisor $D_{U}$ is $\Q$-Cartier.

Let $\pi :Y\to X$ be any resolution of singularities and $\pi_U$ its restriction to the preimage of $U$. Since $D_U$ is a $\Q$-Cartier divisor there exists $k$ such that $kD_U$ is a Cartier divisor. Now since the image of $\pi$ is not contained in the support of $kD_U$, we can define a Cartier divisor $\pi_U^* (kD_U)$ (not just a divisor class) by pulling back the local equations defining $kD_U$. We set $\pi_U^*D_U:=\frac{1}{k} \pi_U^* (kD_U)$  which is a $\Q$-Cartier $\Q$-divisor and hence it is also a Weil $\mathbb Q$-divisor on $\pi^{-1}(U)$. The latter, abusing the notation, is also denoted by $\pi_U^* D_U$. Note that for Weil $\mathbb Q$-divisors on $\pi^{-1}(U)$ one can define a closure map  $${\iota_{\pi^{-1}(U)}}_*: \operatorname{Div}_{\mathbb Q}(\pi^{-1}(U)) \to \operatorname{Div}_{\mathbb Q}(Y)$$ associated to the inclusion $\iota_{\pi^{-1}(U)}\colon \pi^{-1}(U)\to Y$ by extending to divisors the map associating to a subvariety of codimension $1$ of $\pi^{-1}(U)$ its closure in $Y$. However, this map is not compatible with linear equivalence. Nonetheless, we can define $\pi^{\dagger}D$ to be the Poincar\'e dual of the homology class of the closure of the Weil $\mathbb Q$-divisor $\pi_U^* D_U$ by the inclusion $\iota_{\pi^{-1}(U)}\colon \pi^{-1}(U)\to Y$. 
More precisely, we have
\begin{equation}\label{eq:dagger}
  \pi^{\dagger}D:=[(\iota_{\pi^{-1}(U)})_* (\pi_{U}^*D_U)]^{\vee}\in H^2(Y,\Z).  
\end{equation}

Note that for an effective Cartier divisor what we are doing amounts to taking the total transform of the divisor restricted to $U$, and then closing it up, so that we obtain a strict transform over the dissident locus. Note also that since the closure map we used is not compatible with linear equivalence on Weil $\mathbb Q $-divisors,   $\pi^{\dagger}$ is not compatible with linear equivalence of Weil divisors. Nonetheless, we will see in Proposition \ref{prop:ind} that the following definition produces a quadratic form that is well-defined on linear equivalence classes of Weil divisors.

\begin{definition}\label{def:q_on_Weil}
Let $X$ be a $2n$ dimensional compact K\"ahler variety with symplectic singularities 
and let $\sigma$ be a symplectic form. 
Let $U\subset X$ be the open subset defined as above. For any Weil divisor $D$ on $X$ we define 
$q_{X,\sigma}^{\textrm{Weil}}(D)$ as follows
$$
 q_{X,\sigma}^{\textrm{Weil}}(D):= q_{Y,[\sigma_{\pi}]}(\pi^{\dagger}D),
$$
where $\pi :Y\to X$ is any resolution of singularities, $[ \sigma_{\pi}]\in H^2(Y, \mathbb C)$ is the cohomology class of the extension of $\sigma$ to $Y$, 
and $q_{Y,[\sigma_{\pi}]}$ is as in Definition \ref{def:q-kir}.
\end{definition}

\begin{prop}\label{prop:ind}
Let $X$ be a $2n$ dimensional compact K\"ahler variety with symplectic singularities and let $\sigma$ be a symplectic form. 
\begin{enumerate}
    \item[a)]  Definition \ref{def:q_on_Weil} above does not depend on the choice of the resolution of singularities of $X$.
    \item[b)] If $X$ is a compact K\"ahler variety with symplectic singularities and $D$ is a Cartier divisor, then Definitions \ref{def:q_on_Weil} and \ref{def:q-kir}
do coincide.
 \item[c)] Definition \ref{def:q_on_Weil} above does not depend on the choice of the divisor $D$ in the same linear equivalence class.  
\end{enumerate}
\end{prop}
\begin{proof}
a) Let $\pi_j :Y_j\to X,\ j=1,2,$ be two resolutions of singularities. Consider a smooth birational model $\tilde Y$
dominating both $Y_1$ and $Y_2$ and sitting in the following commutative diagram:
\begin{equation}\label{eq:comm}
\xymatrix{
& \tilde Y \ar[dr]^{\tilde\pi_2}\ar[dl]_{\tilde\pi_1}\ar[dd]^{\nu} & \\
Y_1\ar[dr]_{\pi_1} & &Y_2\ar[dl]^{\pi_2}\\
& X &}
\end{equation}
Let $\sigma$ be a symplectic form.
Then for any Weil divisor $D$ on $X$ and $j=1,2$ we have
$$
q_{Y_j,[\sigma_{\pi_j}]}(\pi_j^{\dagger}D )=
q_{\tilde Y,\tilde\pi_j^*[\sigma_{\pi_j}]}(\tilde\pi_j^*(\pi_j^{\dagger}D))=
q_{\tilde Y,[\sigma_{\nu}]}(\nu^{\dagger}(D)),
$$
where the first equality follows from  Theorem \ref{thm:useful}, item (1), and the last from the fact that by the commutativity of the diagram (\ref{eq:comm}) the two classes 
$[\sigma_{\nu}]$ and $\tilde\pi_j^*[\sigma_{\pi_j}]$ coincide and $\pi_j^{\dagger}(D)$ and $\nu^{\dagger}(D)$ differ only by divisors supported on the preimage by $\nu$ of the dissident locus $\Sigma_0$ and that difference does not affect the integrals in Definition \ref{def:q-kir}.
Indeed, for any exceptional divisor $E$ over the dissident locus we have
\begin{equation}\label{eq:q=0}
q_{Y, [\sigma_{\nu}]}(E)= q_{Y, [\sigma_{\nu}]}(\nu^{\dagger}D, E) = 0
\end{equation}
because both bilinear forms are computed by an integral over $E$ involving $\sigma_{\nu}^{n-1}$, which can be computed after removing the zero measure subset of singular points of $E$. Then, by Remark \ref{rmk:diss-not-diss}, the form $\sigma_{\nu}^{n-1}$ is trivial after restriction to the smooth locus of $E$.

b) Let $\pi : Y\to X$ be a resolution of singularities and  let $\sigma\in H^2(X,\mathbb C)$ be the class of a symplectic form.
We first note that 
\begin{equation}\label{eq:Ei}
\pi^* D= \pi^{\dagger}D + \sum a_iE_i,
\end{equation}
where the $E_i$'s are $\pi$-exceptional divisors whose image is supported on $\Sigma_0$.

Thus, from (\ref{eq:Ei}) and (\ref{eq:q=0}) we deduce
$$
 q_{X,\sigma}(D)=q_{Y,\pi^*\sigma}(\pi^*D)= q_{Y,\pi^*\sigma}(\pi^\dagger D) = q_{X,\sigma}^{\textrm{Weil}}(D),
$$
where the first  equality follows again 
from Schwald's result Theorem \ref{thm:useful}, item (1).

c) Let $D$ and $D'$ be two linearly equivalent Weil divisors on $X$ and let $\pi : Y\to X$ be a resolution of singularities. Notice that, by definition of $\pi^\dagger$, the support of $\pi^\dagger(D)-\pi^\dagger(D')$ is contained in the components of the exceptional locus mapping to $\Sigma_0$, and then from \eqref{eq:q=0} we obtain the last item.  
\end{proof}
From now on we will therefore drop the ``Weil'' exponent in the notation introduced in Definition \ref{def:q_on_Weil} and  write $q_{X,\sigma}$. 
If $\mathfrak{C}$ is an interval then  clearly two open sets covering $\mathfrak{C}$ have non-empty intersection.

\section{Boucksom-Zariski decompositions on primitive symplectic  varieties}

We prove that the generalization of the Beauville-Bogomolov-Fujiki form to primitive symplectic varieties  behaves like an intersection product in the sense below and therefore by \cite{Baueretal} it allows to obtain a Boucksom-Zariski decomposition.  A technical difficulty comes from fact that the varieties need not be $\Q$-factorial.

We start by recalling some definitions.
 \begin{definition}\label{def:intersection product} Let $q$ be a quadratic form on a vector space $V$ and let $\mathcal B$ a basis of $V$.  We say that $q$ is an intersection product with respect to $\mathcal B$ if for any $D,D'\in \mathcal B$ such that $D\neq D'$ we have
\begin{equation}\label{eq:star}
q(D,D')\geq 0. 
\end{equation}
 \end{definition}

\begin{definition} \label{prime} 
Let $X$ be a normal compact complex space endowed with a quadratic form $q_{X}$ on $\Div_{\Q}(X)=\Div(X)\otimes \Q$, where $\Div(X)$ denotes the group of Weil divisors (without any equivalence relation).

A reduced and irreducible effective divisor $D\subset X$ is called a prime divisor. An effective Weil $\Q$-divisor $E=\sum a_i E_i$ 
is called $q_{X}$-exceptional if the Gram matrix   $(q_{X}(E_i,E_j))_{i,j}$ of the irreducible components of the support of $E$ 
is negative definite.  
Furthermore, we say that a Weil $\Q$-divisor $D$ is $q_{X}$-nef if 
$$q_{X}(D,E)\geq 0 $$ 
for every effective 
Weil $\Q$-divisor $E$, or equivalently for every prime divisor $E$.
In this context, we will say that $q_X$ is an intersection product (without mentioning any basis) if it is an intersection product with respect to the basis consisting of all prime divisors.
\end{definition}
 \begin{remark}
    Notice that by \cite[Proposition 4.2, item (ii)]{Boucksom1} the Beauville-Bogomolov-Fujiki quadratic form is an intersection product compatible with the linear equivalence on $\Div_{\Q}(X)$ with $X$ being a smooth irreducible symplectic variety. 

\end{remark}

\begin{definition}\label{definition:q-Zar}
Let $X$ be a normal compact K\"ahler variety endowed with a quadratic form $q_{X}$ on $\Div_{\Q}(X)$. Let $D$ be an effective Weil $\Q$-divisor on $X$.
A  rational  $q_{X}$-Zariski decomposition for $D$ is a decomposition 
$$
 D= P(D) + N(D),
$$
where $P(D)$ and $N(D)$ are effective Weil  $\Q$-divisors satisfying the following:
\begin{enumerate}
\item[1)] $P(D)$ is $q_{X}$-nef;
\item[2)] $N(D)$ is $q_{X}$-exceptional or trivial;
\item[3)] $q_{X}(P(D),N(D))=0$;
\end{enumerate}
The divisors $P(D)$ and $N(D)$ are called the positive and negative parts of $D$ respectively.
\end{definition}
Notice that even if the divisor $D$ is Cartier we do not require the positive and the negative parts of $D$ to be 
$\Q$-Cartier.

Let us formulate the following theorem.
\begin{thm}\label{thm:q-Zar}
Let $X$ be a normal compact K\"ahler variety endowed with a quadratic form $q_{X}$ on  $\Div_{\Q}(X)$ which is an intersection product  (in the sense of Definition \ref{prime}).
Then all effective Weil $\Q$-divisors on $X$ have a unique rational  $q_{X}$-Zariski decomposition. Moreover, if $q_X$ is compatible with linear equivalence then it also induces a unique $q_X$-Zariski decomposition on $\Cl_{\Q}(X)$ and both decompositions are compatible.
\end{thm}
\begin{proof}The theorem follows from \cite[Theorem 3.3]{Baueretal} by taking $V$ to be the (infinite dimensional) $\mathbb{Q}$-vector space $\Div_{\mathbb Q}(X)$, which is generated by prime Weil divisors. 
\end{proof}
 \begin{remark} In fact, in Theorem \ref{thm:q-Zar} the Zariski decomposition can be performed for $q_X$ being an intersection product in different ambient spaces $V$ and bases $\mathcal B$ (cf. Definition \ref{def:intersection product}). Once $q_X$ is compatible with linear equivalence, we can consider $D$  to be either an effective $\mathbb{Q}$-divisor or a linear equivalence class of such divisors or their  numerical class. For each such $D$ one can choose a basis consisting of classes of prime divisors in which the class of $D$ is presented with positive coefficients and perform a decomposition with respect to the chosen basis.  By the uniqueness of Zariski decomposition in all these contexts all these decompositions are compatible. More precisely, if $[D]$ is a numerical class (or linear equivalence class) of an effective $\mathbb{Q}$-divisor $D$ then there exists a unique decomposition $[D]=[P]+[N]$ into numerical classes (or linear equivalence classes)  of effective (with respect to our chosen basis) $\mathbb{Q}$-divisors  and it satisfies $[P]=[P(D)]$, $[N]=[N(D)]$. 
\end{remark}

\begin{remark}
If $X$ is $\Q$-factorial, then of course $P(D)$ and $N(D)$ are $\Q$-Cartier. In the general case, if every component of the divisor $D$ is $\Q$-Cartier, then the proof above shows that $P(D)$ and $N(D)$ are $\Q$-Cartier. However, if we only assume that the divisor $D$ is $\Q$-Cartier, then even in the case of irreducible symplectic varieties we do not know whether $P(D)$ and $N(D)$ must automatically be $\Q$-Cartier or not.
\end{remark} 

 \begin{prop} Let $q_X$ be an intersection product on $\Div_{\Q}(X)$ compatible with linear equivalence. Then the unique $q_X$-Zariski decomposition on $\Cl_{\Q}(X)$ from Theorem \ref{thm:q-Zar} satisfies the following additional condition: for all integers $k\geq 0$ such that $kP(D)$ and $kD$ are integral, the natural map 
$$
 H^0(X,\mathcal O_X(kP(D)))\to H^0(X,\mathcal O_X (kD))
$$
is an isomorphism, where $\mathcal O_X(kP(D))$ and $\mathcal O_X(kD)$ are the 
rank one reflexive sheaves associated to $kP(D)$ and $kD$, see Section \ref{ss:refl}.
\end{prop}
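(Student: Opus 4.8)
The plan is to show that every section of $\mathcal{O}_X(kD)$ is in fact a section of $\mathcal{O}_X(kP(D))$ via the natural inclusion coming from the effectivity of $kN(D)$, i.e. to show that the base locus of $|kD|$ forces the divisor of every such section to dominate $kN(D)$. Write $N(D)=\sum_{i=1}^n n_i N_i$ with $n_i\ge 0$. First I would recall that, by Proposition \ref{prop:2}(3), for the Weil divisor $kN(D)\ge 0$ multiplication by a section cutting out $kN(D)$ gives an injection $\mathcal{O}_X(kP(D))=\mathcal{O}_X(kD-kN(D))\hookrightarrow \mathcal{O}_X(kD)$, so on global sections we get the natural injective map in the statement; it remains to prove surjectivity. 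Equivalently, I must show that every effective Weil divisor $D'\in |kD|$ can be written $D'=kN(D)+D''$ with $D''\ge 0$, i.e. $\mathrm{mult}_{N_i}(D')\ge k\,n_i$ for every $i$.

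The key step is a negativity argument on the components $N_i$, exactly parallel to the classical surface case. Fix $D'\in|kD|$ and write its decomposition along the $N_i$'s as $D'=\sum_i e_i N_i + R$ with $e_i\ge 0$ and $R\ge 0$ having no component among the $N_i$. Since $D'\sim kD$ we have $q_{X,\sigma}(D',N_j)=q_{X,\sigma}(kD,N_j)=k\,q_{X,\sigma}(P(D),N_j)+k\,q_{X,\sigma}(N(D),N_j)=k\,q_{X,\sigma}(N(D),N_j)$ for each $j$, using item 3) of Definition \ref{definition:q-Zar}. On the other hand, writing $F:=\sum_i (k n_i - e_i)N_i = k N(D)-\sum_i e_i N_i$, the element $\sum_i e_i N_i$ differs from $D'$ by the effective divisor $R$ whose components meet each $N_j$ non-negatively (because $q_{X,\sigma}$ is an intersection product, Theorem \ref{thm:gen}), so $q_{X,\sigma}(\sum_i e_i N_i,N_j)\le q_{X,\sigma}(D',N_j)=k\,q_{X,\sigma}(N(D),N_j)$. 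Hence $q_{X,\sigma}(F,N_j)\ge 0$ for all $j$. Now $F$ is supported on the $N_i$'s, whose Gram matrix is negative definite by item 2), so writing $F=F^+ - F^-$ with $F^+,F^-\ge 0$ having disjoint supports among the $N_i$, testing against the components of $F^+$ and using the intersection-product property (the pairing $q_{X,\sigma}(F^-,\cdot)$ against a component of $F^+$ is $\ge 0$) gives $0\le q_{X,\sigma}(F^+,F^+)-q_{X,\sigma}(F^+,F^-)\le q_{X,\sigma}(F^+,F^+)$... wait—more cleanly: the inequalities $q_{X,\sigma}(F,N_j)\ge 0$ for all $j$ in the support of $F^+$, combined with negative definiteness of the Gram matrix restricted to that support and non-negativity of cross terms with $F^-$, force $F^+=0$ by the standard lemma (this is precisely the mechanism of Lemma \ref{lem:key} / Claim \ref{claim}). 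Therefore $k n_i - e_i \le 0$ for all $i$, i.e. $\mathrm{mult}_{N_i}(D')\ge k n_i$, which is what we wanted.

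Concretely I would package the last implication by invoking Claim \ref{claim}: if $F^+=\sum_{i\in S} c_i N_i\ne 0$ with $c_i>0$, then Claim \ref{claim} produces $j\in S$ with $q_{X,\sigma}(F^+,N_j)<0$; but $q_{X,\sigma}(F,N_j)=q_{X,\sigma}(F^+,N_j)-q_{X,\sigma}(F^-,N_j)\le q_{X,\sigma}(F^+,N_j)<0$ since $F^-$ and $N_j$ have distinct components among the $N_i$ (so the cross term is $\ge 0$), contradicting $q_{X,\sigma}(F,N_j)\ge 0$. This yields $F^+=0$ and completes the proof of surjectivity; combined with the injectivity observed at the outset we get the asserted isomorphism.

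The main obstacle I anticipate is purely bookkeeping at the level of Weil divisors and reflexive sheaves: one must be careful that all the intersection numbers $q_{X,\sigma}(N_i,N_j)$ make sense (they do, by the extension of the quadratic form to $\mathrm{Cl}(X)$ in Definition \ref{def:q_on_Weil}), that the maps on $H^0$ are genuinely induced by inclusions of reflexive sheaves (Proposition \ref{prop:1}, Proposition \ref{prop:2}), and that ``$\mathrm{mult}_{N_i}(D')\ge k n_i$ for all $i$'' is exactly equivalent to the section of $\mathcal{O}_X(kD)$ lying in the image of $H^0(X,\mathcal{O}_X(kP(D)))$. None of this is deep, but since $X$ is neither smooth nor $\Q$-factorial the argument must be phrased entirely in terms of valuations $v_{N_i}$ and reflexive hulls rather than line bundles; the numerical heart of the matter is identical to Bauer's surface argument and to the proof of Theorem \ref{thm:q-Zar} already given.
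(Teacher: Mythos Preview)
Your argument is correct. The numerical core---showing that for every $D'\in|kD|$ and every $i$ one has $\mult_{N_i}(D')\ge kn_i$---is sound: you compute $q_{X,\sigma}(F,N_j)\ge 0$ for all $j$ with $F=\sum_i(kn_i-e_i)N_i$, then use Claim \ref{claim} together with the intersection-product property to kill $F^+$. The observation that the index $j$ produced by Claim \ref{claim} must lie in the support $S$ of $F^+$ (since $q_{X,\sigma}(F^+,N_j)\ge 0$ for $j\notin S$ by (\ref{eq:star})) is exactly what is needed, and the cross term $q_{X,\sigma}(F^-,N_j)$ is indeed $\ge 0$ for $j\in S$ for the same reason. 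Your remarks about reflexive sheaves and valuations are also to the point.

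The paper organizes the same ingredients differently. Instead of proving $e_i\ge kn_i$ for all $i$ at once, it proceeds \emph{iteratively}: apply Claim \ref{claim} to $N(D)$ itself (not to $F^+$) to find a single index $i$ with $q_X(kD,N_i)=kq_X(N(D),N_i)<0$, deduce $N_i\subset\mathbf{Bs}(|kD|)$, hence $H^0(X,\mathcal O_X(kD-N_i))\twoheadrightarrow H^0(X,\mathcal O_X(kD))$; then observe that $kD-N_i=kP(D)+(kN(D)-N_i)$ is again a $q_X$-Zariski decomposition (by uniqueness), and repeat. After finitely many steps one reaches $kP(D)$. Your $F^+/F^-$ argument is the classical ``all-at-once'' surface proof and avoids the appeal to uniqueness at each step; the paper's peeling argument is slightly more elementary in that it never needs to split $F$ into positive and negative parts, at the cost of invoking uniqueness repeatedly. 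Both routes rest on the same two facts: Claim \ref{claim} and property (\ref{eq:star}).
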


\begin{proof}
Let $k$ be a positive integer such that $kD$ and $kP(D)$ are integral and consider $M \in |kD|$. 
Then $M\sim k (P(D) + N(D))$. Let $N(D)=\sum_i a_i N_i$, with $a_i>0$, then by item 3) of Definition \ref{definition:q-Zar} there exists an index $i$ such that 
$$
q_X (M, N_i)= kq_X (N(D), N_i)<0.
$$
Therefore, since $q_X$ is an intersection product, we have that
\begin{equation}\label{eq:negative_support}
 N_i \subset \supp(M).
\end{equation}
By letting $M$ move in the linear system we conclude that for this $i$
$$
 N_i \subset \mathbf {Bs}(|kD|),
$$
where $\mathbf{Bs}(\cdot)$ denotes the  base locus. 

We thus have 
$$
 H^0(X,\mathcal O_X(kD-N_i))=H^0(X,(\mathcal O_X(kD)\otimes \mathcal I_{N_i})^{\vee \vee})\twoheadrightarrow H^0(X,\mathcal O_X(kD)),
$$
where the first equality follows from Proposition \ref{prop:2}.
Consider now  the decomposition $M-N_i= kP(D)+ (kN(D)-N_i)$. It is clearly a $q_{X}$-Zariski decomposition. We can hence repeat the above argument for $M_1=kD-N_i$. Iterating this process one obtains a sequence of divisors $M_j$ the last of which being $kP(D)$ and a sequence of surjections whose composition is the desired surjection
$$
 H^0(X,\mathcal O_X(kP(D)))\twoheadrightarrow H^0(X,\mathcal O_X(kD)).
$$

\end{proof}

 Notice that our proof of Theorem \ref{thm:q-Zar}  above leads to a slightly stronger statement on Zariski decompositions.
\begin{cor} Let $q_X$ be an intersection product on $\Div_{\Q}(X)$ compatible with linear equivalence.  The decomposition  $D= P(D) + N(D)$ provided by Theorem \ref{thm:q-Zar} for an effective Weil $\Q$-divisor $D$  satisfies $\mathrm{Supp} (P(D))\cup \mathrm{Supp} (N(D))=\mathrm{Supp} (D)$. In particular, for the $q_X$-Zariski decomposition $[D]=[P]+[N] $ in $\Cl_{\Q}(X)$, we can choose $P(D)\sim_{\mathrm{lin}} P$ and  $N(D)\sim_{\mathrm{lin}} N$ such that $\mathrm{Supp} (P(D))\cup \mathrm{Supp} (N(D))=\mathrm{Supp} (D)$.
\end{cor}
\begin{proof}
The inclusion $\mathrm{Supp} (P(D))\subset \mathrm{Supp}(D)$ is given by the natural map of sections, while the inclusion $\mathrm{Supp} (N(D))\subset \mathrm{Supp}(D)$ is given by \eqref{eq:negative_support}. The other inclusion follows from the fact that both $P(D)$ and $N(D)$ are effective.
\end{proof}

Recall that if a bilinear form satisfies condition \eqref{eq:star} on distinct prime  divisors we say that it is an  intersection product.
\begin{thm}\label{thm:gen}
Let $X$ is a $2n$-dimensional compact K\"ahler variety with symplectic singularities and $\sigma\in H^0(\Omega^{[2]}_X)$ a reflexive 2-form which is symplectic on $X_{reg}$. 
Then  the quadratic form $q_{X,\sigma}$ defined in Definition \ref{def:q_on_Weil} is an intersection product on $\Div_{\Q}(X)$ compatible with linear equivalence.
\end{thm}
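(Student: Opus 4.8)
The plan is to compute $q_{X,\sigma}$ on a resolution, exploit that the two ``mixed'' summands of Kirchner's quadratic form vanish on divisor classes, and then play the weak positivity of $(\pi^*\sigma\wedge\overline{\pi^*\sigma})^{n-1}$ off against the numerical triviality of pull-back divisors on contracted curves. Fix distinct prime Weil divisors $D_1\neq D_2$ on $X$; we must prove $q_{X,\sigma}(D_1,D_2)\geq 0$. Choose a resolution $\pi\colon Y\to X$ which is an isomorphism over $X_{reg}$ and denote by $\pi^*\sigma$ the holomorphic $2$-form on $Y$ extending the pull-back of $\sigma_{|X_{reg}}$ (it exists, and is closed, since $X$ has symplectic singularities and $\sigma$ is closed). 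By Definition~\ref{def:q_on_Weil}, Proposition~\ref{prop:ind} and the linearity of $D\mapsto\widetilde D:=(\iota_{\pi^{-1}(U)})_*\big((\pi_{|\pi^{-1}(U)})^*(D_U)\big)$, one has $q_{X,\sigma}(D_1,D_2)=q_{Y,\pi^*\sigma}(\widetilde D_1,\widetilde D_2)$. Since $\dim_{\C}Y=2n$ there are no forms of bidegree $(2n+1,2n-1)$ or $(2n-1,2n+1)$ on $Y$, so, representing $\pi^*\sigma$ by the $(2,0)$-form above and every divisor class by a closed $(1,1)$-form, the mixed summands of $q_{Y,\pi^*\sigma}$ (Definition~\ref{def:q-kir}) vanish identically and
$$
q_{Y,\pi^*\sigma}(a,b)=\frac n2\int_Y(\pi^*\sigma\wedge\overline{\pi^*\sigma})^{n-1}\wedge a\wedge b
$$
for all divisor classes $a,b$ on $Y$. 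Moreover $(\pi^*\sigma\wedge\overline{\pi^*\sigma})^{n-1}=(\pi^*\sigma)^{n-1}\wedge\overline{(\pi^*\sigma)^{n-1}}$, which restricted to any $(2n-2)$-dimensional subvariety of $Y$ is a holomorphic top form wedged with its conjugate, i.e.\ (after a resolution) a non-negative volume form; hence $\int_Y(\pi^*\sigma\wedge\overline{\pi^*\sigma})^{n-1}\wedge[V]\geq 0$ for every effective codimension-$2$ cycle $V$ on $Y$, and in particular $q_{Y,\pi^*\sigma}(A,B)\geq 0$ for any two distinct prime divisors $A\neq B$ on $Y$, since $A\cap B$ is such a cycle.

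Next, write $\widetilde D_i=\widehat D_i+E^{(i)}$, where $\widehat D_i$ is the reduced strict transform of $D_i$ (it occurs with coefficient $1$, as $D_i$ is prime with generic point in $X_{reg}$) and $E^{(i)}\geq 0$ is supported on the prime $\pi$-exceptional divisors. Expanding bilinearly and grouping the $E^{(2)}$-terms, $q_{Y,\pi^*\sigma}(\widetilde D_1,\widetilde D_2)=q_{Y,\pi^*\sigma}(\widehat D_1,\widehat D_2)+q_{Y,\pi^*\sigma}(E^{(1)},\widehat D_2)+q_{Y,\pi^*\sigma}(\widetilde D_1,E^{(2)})$. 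The first two summands are non-negative: $\widehat D_1\neq\widehat D_2$, and each $\widehat D_i$ is non-exceptional so differs from every prime exceptional $E$, hence all the relevant pairings are of distinct prime divisors on $Y$ and are $\geq 0$ by the previous paragraph. So it is enough to prove the vanishing
$$
q_{Y,\pi^*\sigma}(\widetilde D_i,E)=0\qquad\text{for every prime }\pi\text{-exceptional divisor }E,
$$
which forces $q_{Y,\pi^*\sigma}(\widetilde D_1,E^{(2)})=0$ and therefore $q_{X,\sigma}(D_1,D_2)\geq 0$.

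For the vanishing, put $Z:=\pi(E)$, of codimension $c\geq 2$ in $X$, and let $p\colon E\to Z$ be the induced morphism. As in Remark~\ref{rmk:old_and_new} (Kaledin \cite{kal}, Fujiki \cite{Fu}), $(\pi^*\sigma)_{|E}$ is the $p$-pull-back of a $2$-form on $Z$, hence $(\pi^*\sigma\wedge\overline{\pi^*\sigma})^{n-1}_{|E}=p^*\omega_Z$ for some $(2n-2,2n-2)$-form $\omega_Z$ on $Z$, and the restriction formula gives $q_{Y,\pi^*\sigma}(\widetilde D_i,E)=\frac n2\int_E p^*\omega_Z\wedge(\widetilde D_i)_{|E}$. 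If $c\geq 3$ then $\dim Z<2n-2$, so $\omega_Z=0$ and the pairing vanishes. If $c=2$ then $\dim Z=2n-2$, $\omega_Z$ is of top degree on $Z$, and the generic fibre $C$ of $p$ is an irreducible curve, so pushing forward along $p$ gives $q_{Y,\pi^*\sigma}(\widetilde D_i,E)=\frac n2(\widetilde D_i\cdot C)\int_Z\omega_Z$. Now $C$ is $\pi$-contracted (it is mapped to a point of $Z$), and since $\codim_X\Sigma_0\geq 4>2=c$ the subvariety $Z$ is not contained in the dissident locus $\Sigma_0$, so a generic such $C$ lies over a point of $U$; as $(\widetilde D_i)_{|\pi^{-1}(U)}=(\pi_{|\pi^{-1}(U)})^*((D_i)_U)$ and $(D_i)_U$ is $\Q$-Cartier ($U$ being $\Q$-factorial), the projection formula yields $(\widetilde D_i\cdot C)=((D_i)_U\cdot\pi_*C)=0$. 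This proves the vanishing, and with it the theorem.

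The step I expect to be the main obstacle is the last one: evaluating $q_{X,\sigma}$ on the (possibly non-$\Q$-Cartier) exceptional components of $\widetilde D_i$, which amounts to controlling $(\pi^*\sigma\wedge\overline{\pi^*\sigma})^{n-1}$ along the $\pi$-exceptional divisors. The codimension-$2$ strata form the delicate case, because there this form need not restrict to $0$ on $E$, and the required cancellation is produced only by the numerical triviality of $\widetilde D_i$ on the contracted rational curves — which relies both on $\widetilde D_i$ being a genuine pull-back over the $\Q$-factorial open set $U$ and on the bound $\codim_X\Sigma_0\geq 4$. (In the projective case one may instead reduce to a $\Q$-factorialization, cf.\ Remark~\ref{rmk:proj}, sidestepping this difficulty.)
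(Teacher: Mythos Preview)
Your overall architecture matches the paper's: pass to a resolution, verify there that $q_{Y,\pi^*\sigma}$ is an intersection product via the positivity of $(\pi^*\sigma\wedge\overline{\pi^*\sigma})^{n-1}$ on effective codimension-$2$ cycles, and then descend to $X$ by controlling the exceptional contributions. Where you genuinely diverge is in the descent. The paper bootstraps Theorem~\ref{thm:q-Zar}: since $q_{\tilde X,\tilde\sigma}$ is already an intersection product on $\tilde X$, it takes the $q_{\tilde X,\tilde\sigma}$-Zariski decomposition $\pi_U^*D'=P'+N'$ and argues term by term ($P'$ is $q$-nef, non-exceptional $N_i'$ pair non-negatively with $\pi_U^*D$, exceptional $N_i'$ pair to zero). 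You avoid this bootstrap entirely via the naive splitting $\widetilde D_2=\widehat D_2+E^{(2)}$, which is a pleasant simplification.

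The one point that deserves more care is your vanishing step. You assert that $(\pi^*\sigma)_{|E}$ is the $p$-pull-back of a $2$-form on $Z=\pi(E)$, citing Remark~\ref{rmk:old_and_new}; but that remark (and the underlying Kaledin/Fujiki input) only gives the \emph{rank bound} $\mathrm{rk}\,(\pi^*\sigma)_{|E}\le 2n-4$ when $Z\subset\Sigma_0$, not the stronger descent statement. Consequently your dichotomy $c=2$ versus $c\ge 3$ is misaligned: an exceptional $E$ with $\codim Z\ge 3$ but $Z\not\subset\Sigma_0$ (which can occur for a non-minimal resolution over $U$) is not covered by the rank bound, and your ``$\omega_Z=0$ since $\dim Z<2n-2$'' then hinges on an unproved pull-back claim. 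The paper's dichotomy is instead $Z\subset\Sigma_0$ (rank bound $\Rightarrow(\pi^*\sigma)^{n-1}_{|E}=0$) versus $Z\not\subset\Sigma_0$ (push--pull: over $U$ one has $\widetilde D_i=\pi^*((D_i)_U)$ and $(\tilde\sigma\bar{\tilde\sigma})^{n-1}=\pi^*((\sigma\bar\sigma)^{n-1})$, so the integral pushes down to $\pi_*[E]=0$). Your own $c=2$ argument is exactly this projection-formula computation, and it works verbatim for every $E$ with $Z\not\subset\Sigma_0$, regardless of $\codim Z$; so the fix is simply to replace your dichotomy by the paper's. With that adjustment your proof is complete and, by sidestepping the Zariski-decomposition bootstrap, arguably more direct than the original.
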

\begin{proof}

We start by noticing that the compatibility with linear equivalence is exactly the content of Proposition \ref{prop:ind}, item (c).

Let $ \pi: \tilde X\to X$ be a resolution of singularities and $\tilde \sigma:=\sigma_{\pi}$ be the holomorphic $2$-form on $\tilde X$ extending $\sigma$, which exists by \cite[Theorem 1.4]{KS}. 
 
We first show that condition (\ref{eq:star}) holds for $q_{\tilde X,\tilde \sigma}$ on $\tilde X$ and then deduce from this that it also holds on $X$. To prove that (\ref{eq:star}) holds for $q_{\tilde X,\tilde \sigma}$,  we will argue as in  \cite{Boucksom1}. First of all, we notice that if $D$ and $D'$ are two distinct effective prime divisors on $\tilde X$  then the (numerical equivalence) class $\{D \cdot D'\}$ contains the closed positive $(2,2)$-current given by the integration along the effective intersection cycle $D\cap D'$.  Moreover, the form $(\tilde \sigma \overline {\tilde \sigma})^{n-1}$ is a smooth positive form (of bidimension $(2,2)$). Then, from Definition \ref{def:q-kir}, we immediately deduce that
$$
q_{\tilde X,\tilde \sigma}(D,D')= \frac{n}{2}\int_{D\cap D'} (\tilde\sigma \overline{ \tilde\sigma})^{n-1}
$$
and the latter is $\geq 0$ by the above preparation. Since condition (\ref{eq:star}) holds for $q_{\tilde X,{\tilde \sigma}}$ on $\tilde X$ the $q_{\tilde X,{\tilde \sigma}}$-Zariski decomposition holds for effective divisors on $\tilde X$ by Theorem \ref{thm:q-Zar}. 

To deduce that condition (\ref{eq:star}) holds for $q_{X,\sigma}$ on $X$, we argue as follows. 
As is Section \ref{ss:q_on_Weil} we will consider the open subset
$U=X\setminus \Sigma_0$ where $\Sigma_0$ is the dissident locus. 
Let $D$ and $D'$ be two distinct prime Weil divisors on $X$ and $[D_U], [D'_U]$ their restrictions (as cohomology classes since $U$ is $\Q$-Cartier) to $U$. We will use the notation $\pi^\dagger D$ as in \eqref{eq:dagger}. 
 
Consider the $q_{\tilde X, \tilde \sigma}$-Zariski decomposition on $\tilde X$ 
$$
 \pi^{\dagger} D' =P(\pi^{\dagger} D') + N(\pi^{\dagger}D')
$$
with $P':= P(\pi^{\dagger}D')$ and $N':=N(\pi^{\dagger}D')=\sum c_i N'_i$ satisfying all the items of Definition \ref{definition:q-Zar}. 
Then 
$$
 q_{ X, \sigma}(D,D')=q_{\tilde X,\tilde \sigma} (\pi^{\dagger}D, \pi^{\dagger} D')=q_{\tilde X,\tilde \sigma} (\pi^{\dagger}D, P') + \sum c_i q_{\tilde X,\tilde \sigma} (\pi^{\dagger} D,N'_i), 
$$
where the first equality follows from Definition \ref{def:q_on_Weil}. 
Notice that 
$$
 q_{\tilde X,\tilde \sigma} (\pi^{\dagger}D, P')\geq 0
$$
because $\pi^{\dagger}D$ is effective as it is the Poincar\'e dual to an effective homology class and, by Theorem \ref{thm:q-Zar}, the positive part $P'$ is 
$q_{\tilde X,\tilde \sigma}$-nef. 

For a component $N'_i$ which is not a $\pi$-exceptional divisor, we have that $$q_{\tilde X,\tilde \sigma} (\pi^{\dagger} D,N'_i)\geq 0$$ because $D$ and $D'$ are distinct and we have already proved that  (\ref{eq:star}) holds on $\tilde X$. Now if a component $N'_i$ is a $\pi$-exceptional divisor we have two cases. Either $\pi(N'_i)$ is contained in $\Sigma_0$ and then $q_{\tilde X,\tilde \sigma} (\pi^{\dagger}D,N'_i)=0$ as observed in Proposition \ref{prop:ind}, equation (\ref{eq:q=0}).
If $\pi(N'_i)$ is {\it not} contained in $\Sigma_0$,
then by a careful application of the push-pull formula, we get
\begin{eqnarray*}
q_{\tilde X,\tilde \sigma}(\pi^{\dagger}D,N'_i)=&&  \int_{\tilde X} (\tilde\sigma \overline{ \tilde\sigma})^{n-1}[\pi^{\dagger}D] [N'_i] =
 \int_{\pi^{-1}(U)}\pi_U^*(\sigma \bar \sigma)^{n-1}_{|\pi^{-1}(U)} 
 \pi^*_{U}[D_{U}][ N'_i|_{\pi^{-1}(U)}] =
\\  =&&\int_{ U}(\sigma \bar \sigma)^{n-1}[ D_U] \pi_{U*}[N'_i|_{\pi^{-1}(U)}],
\end{eqnarray*}
where $\pi_U:=\pi_{|\pi^{-1}(U)}: \pi^{-1}(U)\to U$. Indeed, the second equality holds by restriction of the integral to the dense open subset $U$ and the equality
$[\pi^{\dagger}D]|_{\pi^{-1}(U)}=\pi_{U}^*[D_{U}]$.  The last equality follows from the push-pull formula applied to the morphism $\pi_U$ which has proper fibres. Then the last integral is zero
as $N'_i$ is contracted by $\pi$ and we get that $q_{\tilde X,\tilde \sigma} (\pi^*D,N'_i)=0$ also in this case.
Therefore
$$
 q_{ X, \sigma}(D,D')\geq 0 
$$
 and we are done.  
  \end{proof}

\begin{proof}[Proof of Theorem \ref{thm:q-Zar-intro}]
It follows immediately from the combination of Theorems \ref{thm:q-Zar} and \ref{thm:gen}.
\end{proof}
\begin{remark}\label{rem nef = closure bir kahler}
Note that if $X$ is an irreducible symplectic manifold and $q_X$ is the Beauville-Bogomolov-Fujiki form then we know that closed cone of $q_X$-nef classes coincides with the closure of the birational K\"ahler cone (see e.g. \cite[Proposition 5.6]{mark_tor}).  
\end{remark}

\begin{remark}
It is important to notice that our proof of the existence of the Boucksom-Zariski decomposition in the
singular case is very different from the one in the smooth case due to Boucksom. Indeed, as mentioned in the introduction,
Boucksom first shows the existence of a general divisorial Zariski decomposition on a compact complex manifold $X$ (and, contrary to Bauer's approach, this is done by defining directly the negative part by attaching asymptotically defined multiplicities to components of the stable base locus of $D$) and then characterizes it when $X$ is a smooth irreducible symplectic variety as a Zariski decomposition with respect to the Beauville-Bogomolov-Fujiki intersection form. If one wants to follow the same path in the singular setting the first thing to do is to extend to this framework the general divisorial Zariski decomposition. This is done in \cite[Section 4]{BH} in the projective case. In the non-projective case things seem to be more subtle, though experts  believe it should work the same (for the 3-dimensional case see e.g. \cite[Section 4.B]{HPmmp}). Still, even if we had a general divisorial Zariski decomposition in the singular case its characterization as a Zariski decomposition with respect to the Beauville-Bogomolov-Fujiki form uses several results (see \cite[Section 4]{Boucksom1}) for smooth irreducible symplectic varieties not available yet in the singular case. 
\end{remark}
\begin{remark}\label{rmk:proj}
If $X$ is a projective variety with symplectic singularities and $\sigma$ is a symplectic form on it, there is an alternative and easier way to deal with non-$\Q$-Cartier components of $\Q$-Cartier divisors. Indeed, by \cite[Lemma 10.2]{BCHM} there exists a small birational morphism $\phi:Y\to X$ from a $\Q$-factorial projective variety $Y$ onto $X$. Then for any Weil divisor $D$ on $X$ we can define $q_{X,\sigma}(D)$ as $q_{Y,\phi^*\sigma}(\phi^{-1}D)$. By the smallness of $\phi$, if $D$ is a $\Q$-Cartier divisor we have that $\phi^{-1}(D)=\phi^*D$ and therefore the previous definition is consistent with the old one thanks to Theorem \ref{thm:useful}, item (1). 
Unfortunately, the analogous existence result in the non-projective case is not known, so we were forced to use our more complicated approach presented in Section \ref{ss:q_on_Weil}.
In the projective case, however, the two approaches are equivalent by Proposition \ref{prop:ind}.
\end{remark}

\section{Bounding denominators and bounded negativity}
Now we would like to present an effective way to bound coefficients in Zariski decompositions for (pseudo)effective divisors on any projective irreducible symplectic variety $X$.
By Theorem \ref{thm:q-Zar-intro}, any effective Cartier divisor $D$ on $X$  can be uniquely presented as
$$D = P + \sum_{i=1}^{k} a_{i}N_{i}, \quad  a_{i} \in \mathbb{Q}_{>0}$$
with $P$ and the $N_i$'s as in Definition \ref{definition:q-Zar} (if $X$ is smooth this also holds for any pseudoeffective divisor).
Notice that  we have  $k \leq \rho(X)-1 \leq h^2(X)-3$. Indeed, by definition of the Zariski decomposition,  the Gram matrix $[q_X(N_i,N_j)]_{i,j}$ is negative definite whereas the signature of the Beauville-Bogomolov form on $\operatorname{NS}(X)$ is $(1,\rho(X)-1)$. Then necessarily $k\leq \mathrm{rk} \, {\rm NS}(X)-1=\rho(X)-1$. 

It is natural to ask whether there exists an integer
$d(X) > 1$ such that for every effective integral divisor $D$ the denominators in the
Zariski decomposition of $D$ are bounded from above by $d(X)$.
If such a bound $d(X)$ exists, then we say that X has {\it bounded Zariski denominators}.

From now on, to lighten the notation, we set $q=q_X$. 
Consider the following system of linear equations which is given by taking $q(D,N_{j})$ for each $j \in \{1, ...,k\}$, namely
$$q(D,N_{j}) = q(P,N_{j}) + \sum_{i=1}^{k} a_{i} q(N_{i},N_{j}) = \sum_{i=1}^{k} a_{i} q(N_{i},N_{j}).$$
Rewriting the above line in a compact way, we obtain
\begin{equation}\label{eq:matrix}
(q(D,N_{1}), ...,q(D,N_{k}))^{T} = \bigg[ q(N_{i},N_{j}) \bigg]_{i,j=1,\ldots, k} \cdot (a_{1}, ...,a_{k})^{T}.
\end{equation}
\begin{lemma}\label{lem:a_i}
In the notation as above, we have 
$$a_{i} = \frac{\det S_i}{\det \bigg[q(N_{i},N_{j}) \bigg]_{k \times k}}$$
where $S_i$ is the $(k\times k)$ matrix  obtained from $\bigg[q(N_{i},N_{j})\bigg]_{k \times k}$ by replacing its $i$-th column with the vector $(q(D,N_{1}), ...,q(D,N_{k}))^{T}$.
\end{lemma}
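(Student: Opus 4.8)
The plan is to solve the linear system \eqref{eq:matrix} by Cramer's rule. First I would record that, by item 3) of the Boucksom-Zariski decomposition — sharpened component by component in the proof of Theorem \ref{thm:q-Zar}, where we showed $q(P(D),N_i)=0$ for every $i$ — each of the $k$ equations $q(D,N_{j}) = q(P,N_{j}) + \sum_{i=1}^{k} a_{i} q(N_{i},N_{j})$ reduces to $q(D,N_{j}) = \sum_{i=1}^{k} a_{i} q(N_{i},N_{j})$, which is exactly the matrix identity \eqref{eq:matrix} with coefficient matrix $M:=\big[q(N_i,N_j)\big]_{i,j=1,\dots,k}$ and right-hand side vector $b:=(q(D,N_1),\dots,q(D,N_k))^T$.

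Next I would observe that $M$ is invertible. Indeed, $M$ is the Gram matrix of the prime components $N_1,\dots,N_k$ of the negative part $N(D)$, which by item 2) of Theorem \ref{thm:q-Zar-intro} is $q_X$-exceptional; hence $M$ is negative definite, and in particular $\det M\neq 0$. (Equivalently, Claim \ref{claim} together with Lemma \ref{lem:key} gives $q_X\big(\sum c_iN_i\big)<0$ for all $(c_i)\geq 0$ not all zero, which is negative definiteness.) Therefore \eqref{eq:matrix} has a unique solution $(a_1,\dots,a_k)^T = M^{-1}b$, in accordance with the fact that the $a_i$ are already pinned down as the coefficients of $N(D)$.

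Finally, Cramer's rule applied to \eqref{eq:matrix} yields, for each $i\in\{1,\dots,k\}$,
$$a_i=\frac{\det S_i}{\det M},$$
where $S_i$ is obtained from $M$ by replacing its $i$-th column with $b$, which is precisely the claimed formula. There is essentially no obstacle here beyond invoking the invertibility of $M$; the only point requiring care is that the denominator $\det\big[q(N_i,N_j)\big]_{k\times k}$ is nonzero, and the negative definiteness supplied by item 2) of Theorem \ref{thm:q-Zar-intro} is exactly what guarantees this.
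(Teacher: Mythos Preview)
Your proof is correct and takes essentially the same approach as the paper, which simply states that the formula follows from \eqref{eq:matrix} by Cramer's rule. You are in fact more explicit than the paper in justifying the invertibility of the Gram matrix via negative definiteness, which is needed for Cramer's rule to apply.
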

\begin{proof}
 The formula for the $a_{i}$'s follows from (\ref{eq:matrix}) using Cramer's rule. 
 Moreover, notice that the $a_{i}$'s are positive since $q(D,N_{i}) < 0$ for each $i \in \{1, ...,k\}$ -- otherwise $N_{i}$ would not be in the negative part of $D$, for instance by \cite[Lemma 4.9]{Boucksom1}. 
\end{proof} 
 
 By the above considerations, we obtained an upper bound on the denominators of the coefficients $a_{i}$ by the value $ | \det [q(N_{i},N_{j})]_{k \times k} |$. This gives a relation between boundedness of denominators in the Boucksom-Zariski decompositions and the boundedness of the squares of prime exceptional divisors.  A similar phenomenon was observed in the case of algebraic surfaces in \cite{BPS17}. This observation might not be very surprising, mostly due to the fact that one expects that irreducible symplectic  manifolds behave like surfaces for what concerns linear series once one replaces the intersection form with the Beauville-Bogomolov-Fujiki quadratic form.
The precise statement is the following:
\begin{thm}\label{ZariskiBNC}
Let $X$ be an irreducible symplectic variety of dimension $2n\geq 2$.
 Then the following two conditions are equivalent:
\begin{enumerate}
    \item[1)] the self-intersection numbers with respect to Bogomolov-Beauville form $q$ of integral prime divisors are bounded from below;
    \item[2)] the denominators of the coefficients $a_{i}$ of the negative parts in the Boucksom-Zariski decompositions of (pseudo)effective Cartier divisors are bounded by a global constant.

\end{enumerate}
\end{thm}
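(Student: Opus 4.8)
The plan is to adapt the surface argument of \cite{BPS17}, with the Beauville--Bogomolov--Fujiki form $q=q_X$ playing the role of the intersection form; the two inputs needed are that $q$ is an intersection product on $\Cl(X)$ (Theorem~\ref{thm:gen}) and that effective $\Q$-Weil divisors have a unique $q$-Zariski decomposition (Theorem~\ref{thm:q-Zar-intro}). As in the rest of the section I would take $X$ projective, so that ample divisor classes are available. Since, as the authors note, ``the arguments of \cite{BPS17} are, up to slight adaptations, also valid'', one could alternatively quote the surface statement of \cite{BPS17} and verify that only these two properties of $q$ intervene; I would nonetheless spell out the two implications.

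For $1)\Rightarrow 2)$ I would argue as follows. Suppose $q(N)\ge -b$ for every prime divisor $N$, and let $D$ be a (pseudo)effective Cartier divisor with $q$-Zariski decomposition $D=P+\sum_{i=1}^{k}a_iN_i$; recall $k\le\rho(X)-1$. Negative definiteness of $G:=(q(N_i,N_j))_{i,j}$ forces every $2\times2$ principal minor to be positive, so $q(N_i,N_j)^2<q(N_i)q(N_j)\le b^2$: \emph{every} entry of $G$ is bounded by $b$ in absolute value, whence by Hadamard's inequality $|\det G|\le\big(b\sqrt{\rho(X)-1}\big)^{\rho(X)-1}$, a bound depending only on $X$ and $b$. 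By Lemma~\ref{lem:a_i}(1) one has $a_i=\det S_i/\det G$, and since the entries of $S_i$ (values of $q$ on the $N_\ell$ and on $D$) lie in a fixed $\tfrac1c\Z$ by the integrality of $q_X$, the denominator of each $a_i$ is bounded in terms of $X$ and $b$ alone.

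For $2)\Rightarrow 1)$ I would argue by contraposition. Assume there are prime divisors $C_n$ with $q(C_n)=-s_n\to-\infty$. Fix an ample Cartier class $H_0$ and an integer $\ell$ with $\ell C$ Cartier for every Weil $C$, and let $\delta=\delta(X)$ bound the ``$q$-divisibility'' of prime divisors inside the fixed lattice $(\Cl(X)/\mathrm{tors},q)$. For each $n$ pick a class $E_n$ of norm bounded independently of $n$ with $q(E_n,C_n)=d_n\le\delta$, so that $H_n:=kH_0+E_n$ is ample for all $k$ past a threshold independent of $n$. Writing $q(H_0,C_n)=d_nb_n$, a Chinese-remainder argument produces such a $k$ with $q(H_n,C_n)=d_n(kb_n+1)$ satisfying $\gcd(q(H_n,C_n),s_n)=\gcd(d_n,s_n)\le\delta$. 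Choosing a multiple $m_n$ of $\ell$ with $m_ns_n>q(H_n,C_n)$ and setting $D_n:=H_n+m_nC_n$, a (pseudo)effective Cartier divisor, one checks directly that $P_n:=H_n+\tfrac{q(H_n,C_n)}{s_n}C_n$ is $q$-nef --- for a prime $C'\ne C_n$ one has $q(H_n,C')>0$ (ampleness, via the Fujiki relation of Theorem~\ref{thm:useful}(2)) and $q(C_n,C')\ge 0$ (intersection product), while $q(P_n,C_n)=0$ --- and that the complementary divisor $\big(m_n-\tfrac{q(H_n,C_n)}{s_n}\big)C_n$ is $q$-exceptional and $q$-orthogonal to $P_n$. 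By the uniqueness part of Theorem~\ref{thm:q-Zar-intro} this \emph{is} the $q$-Zariski decomposition of $D_n$, whose negative part has $C_n$-coefficient of denominator $s_n/\gcd(q(H_n,C_n),s_n)\ge s_n/\delta\to\infty$; hence $2)$ fails.

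I expect the delicate step to be the arithmetic in $2)\Rightarrow 1)$: one must know that the $q$-divisibilities of the $C_n$ are uniformly bounded (which holds because they live in the fixed lattice attached to $X$) and that enough ample classes realize the required residues, so that $\gcd(q(H_n,C_n),s_n)$ stays bounded as $s_n\to\infty$. In the smooth case all divisors are Cartier and $q_X$ is integral up to a universal factor, so everything is clean; in the singular, possibly non-$\Q$-factorial case one must additionally keep track --- using the material of Section~\ref{ss:q_on_Weil} and Theorem~\ref{thm:useful}(1) --- of the denominators of $q_X$ evaluated on the (a priori non-Cartier) negative components, which is the only genuinely new bookkeeping compared with \cite{BPS17}.
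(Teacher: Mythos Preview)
Your $1)\Rightarrow 2)$ is correct and essentially the paper's: you bound $|\det G|$ via Hadamard where the paper uses AM--GM on the eigenvalues to get the slightly sharper $|\det M|\le b^k$, but either route feeds into Lemma~\ref{lem:a_i} in the same way.

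There is a genuine gap in $2)\Rightarrow 1)$. You assert that the $q$-divisibilities $d_n$ of the prime divisors $C_n$ are bounded by some $\delta=\delta(X)$ ``because they live in the fixed lattice attached to $X$''. This is false for non-primitive lattice elements: if $[C_n]=k_n[F_n]$ with $F_n$ primitive then $\mathrm{div}(C_n)=k_n\cdot\mathrm{div}(F_n)$, and nothing a priori bounds $k_n$. This is exactly the delicate point, and the paper's proof is organised around it. Arguing \emph{directly} rather than by contraposition, the paper uses hypothesis~2) itself to bound $k$: the primitive class $F=C/k$ is (pseudo)effective with Boucksom--Zariski decomposition $\tfrac{1}{k}C$, so $k\le d(X)$ (this is the content of Lemma~\ref{Lem 2.5}). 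Only with this bound in hand does the prime-power/CRT step (Lemma~\ref{Lem 2.4}) produce a movable class $A$ with $\gcd(q(C),q(A,C))\mid d(X)!\cdot\card(A_{NS(X)})$, and then the decomposition of $A+kC$ gives the bound on $-q(C)$. In your contrapositive framework you cannot appeal to~2); to repair the argument you would need either Proposition~\ref{prop:mark_prime} (only known in the smooth case) to force $k\le 2$ directly, or an explicit case split showing that if the $k_n$ are themselves unbounded then the decompositions of the $F_n$ already yield unbounded denominators. A secondary, smaller gap: the existence of $E_n$ of \emph{bounded norm} with $q(E_n,C_n)=d_n$ is also unjustified, since the B\'ezout coefficients realising the gcd depend on $C_n$; the paper sidesteps this by working with arbitrary movable classes and the factorisation argument of Lemma~\ref{Lem 2.4} rather than trying to build a single perturbation $E_n$.
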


\begin{proof}[Proof of Theorem \ref{ZariskiBNC}]
For the proof one proceeds along the same lines as in \cite{BPS17} replacing the intersection form on the surface with the Beauville-Bogomolov-Fujiki quadratic form.
\end{proof}

\begin{remark}
Let us come back to the introduction, if $X$ is a $K3$ surface, then by the adjunction formula the self-intersection numbers of prime curves are bounded by $-2$. By \cite[Example 3.2]{BPS17}, the denominators of coefficients in Zariski decompositions are bounded from above by $2^{\rho - 1}!$, where $\rho$ is equal to the Picard number, so from now on we assume that $n>1$.
\end{remark}

\begin{remark}
If $X$ is smooth in item 2) of Theorem \ref{ZariskiBNC} then we can take the divisors to be pseudoeffective, as we do have a Boucksom-Zariski decomposition in this case, thanks to Boucksom. This will be relevant in our Corollary \ref{cor:eff-bir-bound}.
\end{remark}

\begin{remark} In fact, the proof provides an effective way to find a bound in any of the two items of Theorem \ref{ZariskiBNC} knowing the bound in the other. More precisely, if the self-intersections are bounded by $b$ then the denominators are bounded by $|b|^{\rho(X)-1}!$. Conversely if the denominators are bounded by $d$ then the self-intersections are bounded by $d!\cdot d \cdot \card(A_{{\rm NS}(X)})$ where $\card(A_{{\rm NS}(X)})$ is the discriminant of the Neron--Severi lattice of $X$.
\end{remark}

In order to find a global bound for denominators it is then sufficient to show that bounded negativity holds in the context of irreducible symplectic manifolds. In dimension 2 it is well known by the adjunction formula that the squares of irreducible curves are bounded from below by $-2$. In higher dimension, we first need to recall results devoted to the divisibility of exceptional prime divisors and their geometry due to Druel and Markman.

\begin{prop}[{\cite[Proposition 1.4 and Remark 4.3]{Druel}}]\label{prop:druel_prime}
Let $X$ be a projective irreducible symplectic manifold  and let $E$ be a prime exceptional divisor on it. 
\begin{enumerate}
\item[(i)] Then there exists another irreducible symplectic manifold $X'$ birational to $X$ and a contraction $\pi': X'\rightarrow Y'$ to a normal variety whose exceptional locus is the strict transform $E'$ of $E$. 
\item[(ii)] If $l$ is a  general primitive exceptional curve  (i.e. a curve whose strict transform is a general fiber of $E'\to \pi'(E')$), then $l$ is either a smooth $\mathbb{P}^1$ or the union of two such $\mathbb{P}^1$ meeting in a point and moreover $E\cdot l=-2$.
\end{enumerate}
\end{prop}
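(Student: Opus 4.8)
The plan, following the strategy of Druel, is to run the minimal model program for the klt pair $(X,\epsilon E)$ and then exploit the local structure of symplectic contractions. Recall that ``$E$ prime exceptional'' means $E$ is a prime divisor with $q_X(E)<0$; then $E$ is rigid and coincides with the negative part of its own Boucksom--Zariski decomposition \cite{Boucksom1}, so the hyperplane $(E)^{\perp}$ (orthogonality with respect to $q_X$) supports a wall of $\mathcal{M}(X)$. Using the structure of birational models of irreducible symplectic manifolds --- all such models are again irreducible symplectic and are connected by Mukai flops, and $\mathcal{M}(X)=\bigcup_f f^{*}\overline{\mathcal{A}(X_f)}$ over the birational models $X_f$ (Huybrechts, Markman) --- one reaches, after flopping across the $q_X$-trivial curves, a model $X'$ birational to $X$, still irreducible symplectic, on which $(E')^{\perp}$ (the strict transform $E'$ still satisfies $q_{X'}(E')<0$) is a genuine wall of the nef cone $\overline{\mathcal{A}(X')}$, of codimension one. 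The associated face contraction (which exists by the base-point-free theorem) is a projective morphism $\pi':X'\to Y'$ with connected fibres onto a normal projective variety, contracting the extremal ray $R$ dual to that wall. Since $E'$ is rigid it lies on the boundary of $\mathcal{M}(X')$, so this is not an interior wall and $\pi'$ is not small; as $\pi'$ is not of fibre type either ($K_{X'}\equiv 0$ and $\mathrm{Locus}(R)\subseteq E'$), it is divisorial, with $\mathrm{Exc}(\pi')=E'$. Finally, from $K_{X'}=(\pi')^{*}K_{Y'}+aE'$ and $0=K_{X'}\cdot R=aE'\cdot R$ with $E'\cdot R<0$ we get $a=0$: the contraction $\pi'$ is crepant and $Y'$ has symplectic singularities. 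This gives (i).

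For (ii) the contraction $\pi'$ is analysed locally. Symplectic contractions are semismall (Kaledin, Namikawa), so $2\dim F\le\codim_{Y'}\pi'(F)$ for every fibre $F$; applied to the general fibre $l$ of $E'\to\pi'(E')$, of dimension $d$, this reads $2d\le 1+d$, hence $d=1$ and $\pi'(E')$ has codimension $2$ in $Y'$. Choose a general point $y\in\pi'(E')$, a general complete-intersection surface germ $T\subset Y'$ through $y$ meeting $\pi'(E')$ transversally, and set $S:=(\pi')^{-1}(T)$. By Bertini $S$ is a smooth surface containing the fibre $l=(\pi')^{-1}(y)$, and by adjunction $S\to T$ is crepant (because $\pi'$ is), hence the minimal resolution of the singularity of $T$ at $y$. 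The decisive input is that this singularity is Du Val --- a generic transverse slice of a symplectic variety along a codimension-$2$ stratum is a Du Val surface singularity (Kaledin, Namikawa). Therefore $l$ is the exceptional configuration of the minimal resolution of an ADE singularity: a connected Dynkin tree of smooth rational $(-2)$-curves inside $S$.

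It remains to restrict the ADE type and compute the intersection number. As $y$ varies over $\pi'(E')$, the components of $l$ sweep out divisors contained in the \emph{irreducible} divisor $E'$, whose irreducible components correspond to the orbits of the monodromy group of the family, acting by automorphisms of the Dynkin diagram on the set of components of $l$; irreducibility of $E'$ forces this action to be transitive. Among all ADE diagrams only $A_1$ and $A_2$ have a vertex-transitive automorphism group, so exactly the two cases of the statement occur: $l$ a single smooth $\mathbb{P}^1$ (type $A_1$), or $l=l_1\cup l_2$, two smooth $\mathbb{P}^1$'s meeting transversally at one point and swapped by the monodromy (type $A_2$). For the intersection number, restrict $\mathcal{O}_{X'}(E')$ to the slice: $\mathcal{O}_{X'}(E')|_{S}=\mathcal{O}_S(\sum_i l_i)$, so in the $A_1$ case $E'\cdot l=l^2_S=-2$, and in the $A_2$ case $E'\cdot l_i=l_i^2+l_1\!\cdot\! l_2=-2+1=-1$ for each $i$, again totalling $-2$. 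Since a general primitive curve avoids the (small, crepant) indeterminacy loci of $X\dashrightarrow X'$, we conclude $E\cdot l=E'\cdot l=-2$, proving (ii).

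The main obstacle is the first step: producing the birational model $X'$ carrying the divisorial contraction of $E'$. This rests on the full structure theory of the movable cone of an irreducible symplectic manifold --- Huybrechts's results on birational models and their nef cones, Markman's monodromy and wall-divisor theorems, and the base-point-free/contraction theorem for the klt pair $(X',\epsilon E')$ --- together with finiteness of the relevant flop sequence. The local analysis underlying (ii), although delicate, is a now-standard consequence of Kaledin's and Namikawa's work on symplectic singularities, completed by the elementary diagram classification above, which is precisely where the primeness of $E$ is used in an essential way.
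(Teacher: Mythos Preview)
The paper does not supply its own proof of this proposition: it is quoted verbatim as a result of Druel (\cite{Druel}, Proposition~1.4 and Remark~4.3) and used as a black box in the proof of Proposition~\ref{prop:BNChk}. There is therefore nothing in the paper to compare your argument against.

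That said, your sketch is a faithful and essentially correct outline of Druel's original proof. The production of the model $X'$ via the chamber decomposition of the movable cone (Huybrechts, Markman) and the base-point-free theorem is the standard route to (i); your observation that $\pi'$ is crepant and that $Y'$ has symplectic singularities is exactly what sets up the local analysis. For (ii), the semismallness input (Kaledin), the Du~Val transverse slice (Namikawa, Kaledin), and the monodromy argument reducing to $A_1$/$A_2$ via vertex-transitivity of the Dynkin automorphism group are all correct and are precisely the ingredients Druel uses. Two small points of precision: the semismallness inequality should be stated for the generic fibre over a stratum rather than ``for every fibre $F$'', though your application to the generic fibre of $E'\to\pi'(E')$ is fine; and the identification $\mathcal{O}_{X'}(E')|_S=\mathcal{O}_S(\sum_i l_i)$ with multiplicity one uses that $E'$ is reduced and that the generic slice $S$ meets $E'$ transversally, which you might make explicit. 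Neither affects the validity of the argument.
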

\begin{prop}[{\cite[Corollary 3.6 part 1 and 3]{markman}}]\label{prop:mark_prime}
Let $X,E$ and $l$ be as above. Let us identify $H^2(X,\mathbb{Q})^\vee$ with $H_2(X,\mathbb{Q})$ and let $E^\vee$ be the map given by $q(E,\cdot)$ with the Beauville-Bogomolov-Fujiki form. Then
\begin{enumerate}
\item[(i)] the class of $l$ is $\frac{-2E^\vee}{q(E)}$, and
\item[(ii)] either $E$ is primitive or $E/2$ is.
\end{enumerate}
\end{prop}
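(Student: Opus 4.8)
The plan is to reconstruct Markman's argument, the one geometric input being the contraction supplied by Proposition~\ref{prop:druel_prime}. After replacing $X$ by the birational model $X'$ of Proposition~\ref{prop:druel_prime}(i) we may assume there is a contraction $\pi\colon X\to Y$ onto a normal variety whose exceptional locus is the prime divisor $E$, with $Y$ carrying a symplectic form $\sigma_Y$ such that $\sigma=\pi^{*}\sigma_Y$, with general fibre of $\pi|_E\colon E\to\pi(E)$ the curve $l$ of Proposition~\ref{prop:druel_prime}(ii), and with $E\cdot l=-2$. This reduction is harmless: $f\colon X\dashrightarrow X'$ is an isomorphism in codimension one carrying $E$ to the exceptional locus, so $f_{*}$ is a $q$-isometry $H^{2}(X,\mathbb{Z})\xrightarrow{\sim}H^{2}(X',\mathbb{Z})$ sending $[E]$ to its strict transform, and since the general curve $l$ may be chosen inside the locus where $f$ is an isomorphism, the functional $x\mapsto x\cdot l$ on $H^{2}(X)$ corresponds, via $f_{*}$, to pairing with the strict transform of $l$ on $X'$. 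Throughout we also use that $q_X(E)<0$ for a prime exceptional divisor (see \cite{Boucksom1}) and that $h^{2,0}(X)=1$.

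I would prove $(i)$ via the Fujiki relation. Taking $w=\sigma$ in Definition~\ref{def:q-kir} and noting that the cross-terms $\int_X\sigma^{n}\bar\sigma^{n-1}a$ vanish for bidegree reasons when $a$ is of type $(1,1)$, one gets
$$
q_X(E,x)=\lambda\int_X(\sigma\bar\sigma)^{n-1}\wedge E\wedge x=\lambda\int_E\bigl((\sigma\bar\sigma)^{n-1}\wedge x\bigr)\big|_E
$$
for every $x\in H^{1,1}(X)$, with $\lambda>0$ depending only on the normalisations. Because $\sigma=\pi^{*}\sigma_Y$, the restriction $\sigma|_E$ annihilates the tangent directions to the fibres of $\pi|_E$, so $(\sigma\bar\sigma)^{n-1}|_E$ is the pull-back by $\pi|_E$ of a $(2n-2,2n-2)$-form $\omega_B$ on $B:=\pi(E)$; here $\dim B=2n-2$ since the general fibre $l$ is a curve, and the generic rank of $\sigma|_E$ is exactly $2n-2$, for if it were at most $2n-4$ then $(\sigma\bar\sigma)^{n-1}|_E\equiv 0$, forcing $q_X(E)=0$, a contradiction. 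Fibre integration then gives $q_X(E,x)=C\,(x\cdot l)$ with $C=\lambda\int_B\omega_B$, first for $(1,1)$-classes and then for all $x\in H^{2}(X,\mathbb{C})$ since both $q_X(E,\cdot)$ and $x\mapsto x\cdot l$ vanish on $\mathbb{C}\sigma\oplus\mathbb{C}\bar\sigma$. Evaluating at $x=E$ yields $q_X(E)=C\,(E\cdot l)=-2C$, hence $C=-q_X(E)/2\neq 0$, and therefore
$$
[l]=C^{-1}E^{\vee}=\frac{-2}{q_X(E)}\,E^{\vee},
$$
which is $(i)$ on $X'$, and hence on $X$ by the reduction above.

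For $(ii)$ I would combine $(i)$ with the integrality of $[l]$. Let $m\geq 1$ be the divisibility of $[E]$ in $H^{2}(X,\mathbb{Z})$ — equivalently in $\mathrm{NS}(X)$, by the Lefschetz theorem — and write $E=me$ with $e$ primitive. Put $d:=\gcd\{q_X(e,x):x\in H^{2}(X,\mathbb{Z})\}$, so $e^{\vee}=q_X(e,\cdot)=d\,\phi$ with $\phi\in H_{2}(X,\mathbb{Z})$ primitive, and $d\mid q_X(e)$, say $q_X(e)=d\kappa$ with $\kappa\in\mathbb{Z}_{<0}$. Then $E^{\vee}=m d\,\phi$ and $q_X(E)=m^{2}d\kappa$, so by $(i)$
$$
[l]=\frac{-2}{q_X(E)}\,E^{\vee}=\frac{-2\,md}{m^{2}d\kappa}\,\phi=\frac{-2}{m\kappa}\,\phi .
$$
Since $[l]\in H_{2}(X,\mathbb{Z})$ and $\phi$ is primitive, $\tfrac{-2}{m\kappa}$ is an integer; it is positive because $q_X(E)<0$. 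Writing it as $t\geq 1$ we get $m\,|\kappa|\,t=2$ with $m,|\kappa|,t\geq 1$, forcing $m\leq 2$. Thus $E$ is primitive when $m=1$ and $E/2$ is primitive when $m=2$, which is $(ii)$.

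The step I expect to be the main obstacle is the geometric input underlying $(i)$: making rigorous that $\sigma$ is a genuine pull-back from $Y$, that the base $B=\pi(E)$ has dimension $2n-2$ with $\sigma|_E$ of full generic rank along the fibres, and that the general fibre is the rational curve of Proposition~\ref{prop:druel_prime}(ii) — that is, precisely the structure theory of symplectic divisorial contractions that Druel (and, behind it, work of Wierzba, Kaledin and Namikawa) supplies. Once that is granted, $(i)$ is a fibre-integration computation and $(ii)$ is elementary arithmetic in the lattice $H^{2}(X,\mathbb{Z})$; the only other minor point is the birational transfer from $X'$ to $X$, which is routine.
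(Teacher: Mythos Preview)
The paper does not prove this proposition; it is simply quoted from \cite[Corollary~3.6]{markman} and used as a black box in the proof of Proposition~\ref{prop:BNChk}. There is therefore no ``paper's own proof'' to compare against.

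That said, your reconstruction is correct and is essentially the argument one finds in Markman's paper (which in turn rests on Druel's analysis \cite{Druel}): pass via Proposition~\ref{prop:druel_prime} to an honest divisorial symplectic contraction, use that $(\sigma\bar\sigma)^{n-1}|_E$ is pulled back from the base of the ruling to obtain $q_X(E,x)=C\,(x\cdot l)$ by fibre integration, fix the constant using $E\cdot l=-2$, and then read off the divisibility bound $m\le 2$ from the integrality of $[l]$ and the primitivity of $\phi=e^\vee/d$ in $H_2(X,\mathbb Z)$. Your arithmetic in part~(ii) is clean; the observation that a functional on a free abelian group is primitive in the dual precisely when its values have $\gcd$ equal to~$1$ is exactly what makes $\phi$ primitive and forces $m|\kappa|t=2$.

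The caveat you raise at the end is well placed: the assertion that $\sigma|_E$ has generic rank $2n-2$ with kernel tangent to the fibres of $\pi|_E$ is the substantive geometric input, and it is precisely what the structure theory of symplectic divisorial contractions (Wierzba, Namikawa, Kaledin) supplies behind Proposition~\ref{prop:druel_prime}. One small additional remark: the identity $\sigma=\pi^*\sigma_Y$ should be understood on $\pi^{-1}(Y_{\mathrm{reg}})$ and extended by continuity; over the singular locus of $Y$ one argues instead via the null-foliation of $\sigma|_E$, which is the cleaner way to justify that $(\sigma\bar\sigma)^{n-1}|_E$ descends to the base.
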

From these two results, we deduce the following.
\begin{prop}\label{prop:BNChk}
Let $X$ be a projective irreducible symplectic variety and let $E$ be a prime exceptional divisor on $X$. Assume that the conclusions of Propositions \ref{prop:druel_prime} and \ref{prop:mark_prime} hold.
Then $|q(E)|\leq 4\card(A_X)$, where $A_X$ is the finite discriminant group $H^2(X,\mathbb{Z})^\vee/H^2(X,\mathbb{Z})$. 
\end{prop}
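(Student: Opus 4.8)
The plan is to turn Markman's description of the primitive exceptional curve $l$ into an arithmetic constraint on $q(E)$, and then combine it with the elementary fact that the divisibility of a primitive vector in a lattice divides the discriminant of that lattice. Throughout I take $q=q_X$ to be the primitive integral Beauville--Bogomolov--Fujiki form on $H^2(X,\mathbb Z)$, so that $\card(A_X)$ equals the absolute value of the Gram determinant of $q$, and for $E\in H^2(X,\mathbb Z)$ I write $\div(E)$ for the positive generator of the ideal $q(E,H^2(X,\mathbb Z))\subset\mathbb Z$ (its divisibility). Note that $\div(E)\mid q(E)$ for free, since $E$ is itself an integral class, and that $q(E)\ne 0$ (indeed $q(E)<0$, as $E$ is exceptional).

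First I would invoke Proposition \ref{prop:mark_prime}(i): under the identification of $H^2(X,\mathbb Q)^\vee$ with $H_2(X,\mathbb Q)$, the class of $l$ is $\frac{-2E^\vee}{q(E)}$, where $E^\vee=q(E,\cdot)$. Since $l$ is an actual curve, pairing it against an arbitrary integral class $D$ must give an integer, i.e.\ $\frac{-2q(E,D)}{q(E)}\in\mathbb Z$ for all $D\in H^2(X,\mathbb Z)$; equivalently $q(E)\mid 2\div(E)$. Together with $\div(E)\mid q(E)$ this forces $|q(E)|\in\{\div(E),2\div(E)\}$, so it only remains to bound $\div(E)$, and here I would split according to Proposition \ref{prop:mark_prime}(ii). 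If $E$ is primitive, then $\div(E)$ divides $\card(A_X)$: extending $E$ to a $\mathbb Z$-basis of $H^2(X,\mathbb Z)$, every entry of the first row of the Gram matrix of $q$ is a multiple of $\div(E)$, hence so is its determinant, which is $\pm\card(A_X)$; therefore $|q(E)|\le 2\div(E)\le 2\card(A_X)$. If instead $E/2$ is primitive, set $E'=E/2$, so that $q(E)=4q(E')$ and $\div(E)=2\div(E')$; the divisibility $q(E)\mid 2\div(E)$ now reads $4q(E')\mid 4\div(E')$, which combined with $\div(E')\mid q(E')$ gives $|q(E')|=\div(E')$, whence $|q(E)|=4|q(E')|=4\div(E')\le 4\card(A_X)$, using that $\div(E')\mid\card(A_X)$ by the same basis argument applied to the primitive vector $E'$. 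In both cases $|q(E)|\le 4\card(A_X)$, which is the claim.

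The argument is genuinely short once Propositions \ref{prop:druel_prime} and \ref{prop:mark_prime} are granted, so I do not expect a serious obstacle; the two points that deserve a moment of care are (a) checking that the identification $H^2(X,\mathbb Q)^\vee\cong H_2(X,\mathbb Q)$ appearing in Markman's statement is the one induced by the Kronecker pairing, so that integrality of the curve $l$ really does translate into $\frac{2q(E,D)}{q(E)}\in\mathbb Z$ for every integral $D$, and (b) the bookkeeping of the factor $4$ through the case $E/2$ primitive, which is precisely where the bound $4\card(A_X)$ is attained.
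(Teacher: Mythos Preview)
Your proof is correct and follows essentially the same route as the paper's: both extract from Markman's description of $l$ the divisibility constraint $q(E)\mid 2\,\div(E)$, split according to whether $E$ or $E/2$ is primitive, and then bound the divisibility of the primitive class in terms of $\card(A_X)$. The only difference lies in this last step: you observe directly that $\div(v)$ divides the Gram determinant by extending a primitive $v$ to a $\mathbb Z$-basis, whereas the paper instead notes that $q(D,\cdot)/d$ defines an element of the discriminant group $A_X$ whose order is (a known multiple of) $d$ and invokes Lagrange's theorem. The paper's phrasing has the small bonus of immediately yielding the refinement remarked upon right after the proof, namely that $\card(A_X)$ may be replaced by the exponent of $A_X$ when the latter is not cyclic; your Gram-determinant argument gives the stated bound cleanly but does not directly produce this sharper form.
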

\begin{proof}  
By the conclusion of Proposition \ref{prop:mark_prime}, either $E$ or $E/2$ is primitive. Let $D$ be this primitive class. Let $d$ be the divisibility of $D$, that is the positive generator of the ideal $q(D,H^2(X,\mathbb{Z}))\subset \mathbb{Z}$. Notice that $d$ divides $q(E)$. From the conclusion of Proposition \ref{prop:mark_prime}, item (i), we have that $\frac{-2q(E,\cdot)}{q(E)} $ is an integral class, which means that $q(E)$ divides $2q(E,\cdot)$. This last term generates the ideal $2d\mathbb{Z}\subset \mathbb{Z}$ if $E$ is primitive and $4d\mathbb{Z}\subset \mathbb{Z}$ otherwise. Therefore, we have one of the following:
\begin{itemize}
\item[a)] $D=E$ and $q(E)=kd,\ k=1,2$.
\item[b)] $2D=E$ and $q(E)=kd,\ k=1,2,4$.
\end{itemize}
Hence, we only need to bound $d$. Let us consider the lattice $L:=H^2(X,\mathbb{Z})$ (which is a topological invariant) and the natural inclusion $L\hookrightarrow L^\vee$ given by sending an element $t\in L$ to $q(t,\cdot)$. This inclusion has finite index and $L^\vee/L=A_X$ is a finite group.  Since $\frac{-2q(E,\cdot)}{q(E)} $ is an integral class, by a) or b) above,   the element $\frac{2q(D,\cdot)}{kd}$ lies in $L^\vee$ and it gives a class in $A_X$ of order $kd/2$, as $D$ is primitive. Therefore, $d$ divides $2\card(A_X)$ and our claim holds.
\end{proof}
Notice that, if $\dim(X)\geq 4$, the bound $4\card(A_X)$ is sharp, as it is obtained in the case of the Hilbert-Chow exceptional divisor on the Hilbert scheme of points on a $K3$ surface. We can say the same about generalized Kummer varieties. However, when $A_X$ is not cyclic, its order can be replaced by the highest order of its elements. In the following table we summarize in the four known deformation classes what is $A_X$, the order $d$ of its highest order element and the highest absolute value of the square of a prime exceptional divisor:
\begin{center}
	\begin{tabular}{|c|c|c|c|}
\hline
Deformation type & $A_X$ & Order $d$ & Highest negative divisor square \\
\hline
$K3^{[n]}$-type & $\mathbb{Z}_{/(2n-2)\mathbb{Z}}$ & 2n-2 & 8n-8 \\
\hline
Kummer $n$ type & $\mathbb{Z}_{/(2n+2)\mathbb{Z}}$ & 2n+2 & 8n+8 \\
\hline
O'Grady sixfolds & $\mathbb{Z}_{/2\mathbb{Z}}\times\mathbb{Z}_{/2\mathbb{Z}}$ & 2 & 8\\
\hline
O'Grady tenfolds & $\mathbb{Z}_{/3\mathbb{Z}}$ & 3 & 6 \\
\hline 
\end{tabular}
 \end{center}
The above values can be found in \cite[Section 9]{mark_tor} for $K3^{[n]}$-type manifolds, in \cite{yoshi} for Kummer $n$ type, in \cite{rap_form} for O'Grady's sixfolds and \cite{mark_g2} for O'Grady's tenfolds.

\begin{remark} 
There has been an effort to prove a similar boundedness result for a wider class of divisors, called wall divisors
(see \cite[Definition 1.2]{Mon}). Prime exceptional divisors are automatically wall divisors (see e.g. \cite[Lemma 1.4]{Mon}), while an example of a wall divisor which is not prime exceptional is  obtained by considering a divisor whose class is dual to the class of a line in a projective plane inside a smooth irreducible symplectic fourfold. A complete boundedness result for wall divisors (with no hypothesis on the deformation class of the irreducible symplectic manifold) was proven in \cite{amver_last}. However their result does not give an explicit bound, which is what we need and prove here to obtain an explicit bound for the denominators. The result is rather formulated for MBM classes, but these are  exactly the classes of curves dual to wall divisors (see e.g. \cite[Remark 2.4]{TAMS}).
\end{remark}

\begin{corollary} \label{corofZariskiBNC} Let $X$ be a projective irreducible symplectic variety of dimension $>2$. Assume that the conclusions of Propositions \ref{prop:druel_prime} and \ref{prop:mark_prime} hold for $X$. Then the denominators of the coefficients of the positive and negative parts in the Boucksom-Zariski decompositions of all effective Cartier divisors are  bounded by $(4\card(A_X))^{\rho(X)-1}!$. In particular, on every $Y$ deformation equivalent to $X$ these coefficients are globally bounded by $(4\card(A_X))^{h^{1,1}(X)-1}!$
\end{corollary}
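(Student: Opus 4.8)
The plan is to combine two results already established in the paper: the explicit bounded negativity estimate of Proposition \ref{prop:BNChk}, which bounds $|q_X(E)|$ for a prime exceptional divisor $E$ by $4\card(A_X)$, and the implication $1)\Rightarrow 2)$ of Theorem \ref{ZariskiBNC}, which converts a uniform lower bound on self-intersections into a uniform bound on the denominators occurring in Boucksom--Zariski decompositions. The only refinement over a verbatim application of that implication is to retain the sharper exponent $\rho(X)-1$ instead of $h^{1,1}(X)-1$, using the bound on the number of components of a negative part recorded at the beginning of Section 4.

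First I would fix an effective Cartier divisor $D$ on $X$ with Boucksom--Zariski decomposition $D=P(D)+\sum_{i=1}^k a_iN_i$ given by Theorem \ref{thm:q-Zar-intro}, and check that each $N_i$ is a prime exceptional divisor: by Claim \ref{claim} (equivalently, by the negative definiteness of the Gram matrix of the components of $N(D)$) one has $q_X(N_i)<0$, and distinct $N_i$ cannot be linearly equivalent, so that $k\le\rho(X)-1$. Then, under the standing hypotheses, Proposition \ref{prop:BNChk} gives $|q_X(N_i)|\le b:=4\card(A_X)$ for every $i$; in particular condition $1)$ of Theorem \ref{ZariskiBNC} holds for $X$ with the explicit constant $b$.

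Next I would run the linear-algebra step from the proof of $1)\Rightarrow 2)$: by Lemma \ref{lem:a_i} the denominators of the coefficients $a_i$ divide $|\det M|$, where $M=(q_X(N_i,N_j))_{i,j}$ is the negative definite Gram matrix of the $N_i$, and the arithmetic--geometric mean inequality (applied to the eigenvalues of $-M$, as in the cited proof) gives $|\det M|\le b^k\le b^{\rho(X)-1}$. Hence every denominator that can occur is a positive integer at most $b^{\rho(X)-1}$; a fortiori it is bounded by --- and in fact divides --- $b^{\rho(X)-1}!=(4\card(A_X))^{\rho(X)-1}!$, which is the uniform constant in the statement. To pass from the negative to the positive part I would simply note that $P(D)=D-\sum_i a_iN_i$ with $D$ integral and $\mathrm{Supp}(P(D))\subseteq\mathrm{Supp}(D)$, so the coefficients of $P(D)$ have exactly the same denominators as those of $N(D)$.

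For the final ``in particular'' I would invoke that $\card(A_X)=|H^2(X,\mathbb Z)^\vee/H^2(X,\mathbb Z)|$ and $h^{1,1}(X)$ depend only on the deformation class of $X$, that $\rho(Y)\le h^{1,1}(Y)=h^{1,1}(X)$ for $Y$ deformation equivalent to $X$, and that the factorial is monotone, so that the bound for $Y$ produced by the first part, namely $(4\card(A_Y))^{\rho(Y)-1}!$, is dominated by $(4\card(A_X))^{h^{1,1}(X)-1}!$. The proof is essentially bookkeeping, so I do not expect a genuine obstacle; the one point requiring care is the very first step --- recognizing the components of the negative part as prime exceptional divisors so that Proposition \ref{prop:BNChk} is applicable, together with the sharper count $k\le\rho(X)-1$ --- and, for the deformation statement, keeping in mind that the hypotheses of Propositions \ref{prop:druel_prime} and \ref{prop:mark_prime} must still be available for the deformed varieties, which is automatic in the smooth case.
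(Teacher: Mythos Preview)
Your proposal is correct and follows the same route as the paper: combine Proposition \ref{prop:BNChk} with the $1)\Rightarrow 2)$ argument inside the proof of Theorem \ref{ZariskiBNC}, sharpened via the inequality $k\le\rho(X)-1$ recorded at the start of Section~4, and then pass to deformations by the invariance of the lattice $(H^2,q)$ (the paper phrases this last step as an application of Ehresmann's Lemma). Your write-up is in fact more explicit than the paper's two-line proof, and your caveat that the hypotheses of Propositions \ref{prop:druel_prime} and \ref{prop:mark_prime} must persist along the deformation is well taken.
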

\begin{proof}
The first part follows immediately from the proof of Theorem \ref{ZariskiBNC} and Proposition \ref{prop:BNChk}. For the second statement we use the fact that the lattices $(H^2(X,\mathbb{Z}), q)$ and $(H^2(Y,\mathbb{Z}), q)$ are isometric to each other follows from by Ehresmann's Lemma and \cite{Beau84}.
\end{proof}

\section{Applications to effective birationality}

We collect in this section the applications to effective birationality of big and effective line bundles $L$ on projective holomorphic symplectic manifolds. 

If one could control the singularities of members of the linear system $|L|$, the existence of a uniform (although not explicit) bound would follow from \cite[Theorem 1.3]{HMX}. For the history of the problem of the effectivity of birational pluri(log)canonical maps (and more generally of the Iitaka fibration) in the framework of the MMP and a list of references we refer the interested reader to \cite{HMX}, \cite{BZ} and the very recent \cite{Bi}.
\begin{proof}[Proof of Corollary \ref{cor:eff-bir-bound}]

Let $L$ be a big line bundle on $X$. 
Consider the Boucksom-Zariski decomposition 
$$
a L = P + N,
$$
where $P$ is a $q_{X}$-nef Cartier divisor, $N$ is a Cartier $q_X$-exceptional divisor, and 
$$ 
a  = (4\Card(A_X))^{\rho(X)-1} !
$$
is the integer given by Theorem \ref{thm:bound-intro} and clearing the denominators in the Boucksom-Zariski decompositions.
Since $L$ is big by hypothesis, the positive part $P$ is big. 

Since the cone of $q_X$-nef classes coincides with the closure of the birational K\"ahler cone (see Remark \ref{rem nef = closure bir kahler}), therefore there exists a smooth projective irreducible symplectic variety $X'$ and a birational map
$$
\phi :X\dashrightarrow X'
$$
such that 
$$
P':= f_* P
$$
is an integral, nef and big divisor on $X'$.  
Now we can apply Koll\'ar's extension \cite[Theorem 5.9]{Kol} to nef and big divisors of the Angehrn-Siu result to $P'$, 
to deduce that 
for all $m\geq ({\rm dim} X +2)({\rm dim} X+3)/2$
the morphism associated to $|mP'|$ is injective on $X'$.
As a by-product we then obtain that the linear system $|amL|$
separates two generic points on $X$.
\end{proof}

\begin{cor}\label{cor:bir-bound}
Let $n$ be a positive integer and $C$ be a positive constant.
Then the family of all smooth projective irreducible symplectic varieties of dimension
$2n$, of a fixed deformation type and endowed with a big line bundle of volume at most $C$, is birationally bounded (i.e. there exists an algebraic variety parametrizing birational equivalence classes of such varieties).
\end{cor}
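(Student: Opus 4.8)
The strategy is to deduce the statement from Corollary \ref{cor:eff-bir-bound} by a standard generic-projection argument; no geometric input beyond that corollary is needed. First I would fix the deformation type once and for all. Since the lattice $(H^2(X,\mathbb Z),q_X)$ is a deformation (indeed topological) invariant of the class --- this is exactly the fact used in the proof of Corollary \ref{corofZariskiBNC} via Ehresmann's lemma --- the numbers $\Card(A_X)$ and $h^{1,1}(X)$ are constant in the family, while $\rho(X)\le h^{1,1}(X)$. Hence, using $h^{1,1}(X)$ in place of $\rho(X)$ in \eqref{eq:eff}, there is an integer
$$
m_0=m_0(n,\text{def.\ type})\ :=\ \Big\lceil\ \tfrac12(2n+2)(2n+3)\big(4\Card(A_X)\big)^{h^{1,1}(X)-1}!\ \Big\rceil,
$$
depending only on $n$ and the chosen deformation type, such that for \emph{every} pair $(X,L)$ in the family the rational map $\phi:=\phi_{|m_0L|}\colon X\dashrightarrow \PP^{N_X}$ is birational onto its image $Y_X:=\overline{\phi(X)}$; in particular $|m_0L|\neq\emptyset$ and $\dim Y_X=2n$.

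Next I would bound the degree of $Y_X$ in terms of $C$. Passing to a resolution $p\colon\widetilde X\to X$ of the indeterminacy locus of $\phi$, the morphism $\psi:=\phi\circ p\colon\widetilde X\to\PP^{N_X}$ satisfies $\psi^*\cO(1)=p^*(m_0L)-F$ for an effective divisor $F$. Since $\cO_{Y_X}(1)$ is ample, $\psi$ is birational onto $Y_X$, and the volume is a birational invariant, homogeneous of degree $2n$, and monotone under addition of effective divisors, we obtain
$$
\deg Y_X\ =\ \vol\big(\psi^*\cO(1)\big)\ \le\ \vol\big(p^*(m_0L)\big)\ =\ m_0^{2n}\,\vol(L)\ \le\ m_0^{2n}C.
$$
Put $D:=\lfloor m_0^{2n}C\rfloor$; this is a constant depending only on $n$, $C$ and the deformation type.

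Then I would push everything into one fixed ambient space. If $N_X\le 2n+1$, pick any linear embedding $\PP^{N_X}\hookrightarrow\PP^{2n+1}$; otherwise choose a general linear center $\Lambda\subset\PP^{N_X}$ of dimension $N_X-2n-2$, which by general position is disjoint from $Y_X$, and let $\pi\colon\PP^{N_X}\dashrightarrow\PP^{2n+1}$ be the associated projection. Then $\pi$ restricts to a finite morphism on $Y_X$, and by the classical generic projection theorem the induced morphism $Y_X\to\PP^{2n+1}$ is birational onto its image $Y'_X$, a hypersurface of degree $\deg Y'_X\le\deg Y_X\le D$. By construction $X$ is birational to $Y'_X$ (through $\widetilde X$ and $Y_X$).

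Finally, the reduced hypersurfaces of degree $\le D$ in $\PP^{2n+1}$ form a family of finite type: it is the disjoint union over $e=1,\dots,D$ of the projective spaces $T_e:=\PP\big(H^0(\PP^{2n+1},\cO(e))\big)$, each equipped with its universal hypersurface $\mathcal H_e\subset\PP^{2n+1}\times T_e$, and $\mathcal H_e\to T_e$ is projective. Since every $X$ in the family is birational to one of the fibers of $\coprod_{e\le D}\big(\mathcal H_e\to T_e\big)$, the family is birationally bounded. The whole argument is a routine assembly of standard facts; the genuinely new ingredient is the uniform bound $m_0$ coming from Corollary \ref{cor:eff-bir-bound}, and the only point requiring (classical but nontrivial) care is the generic projection step --- that a general linear projection to $\PP^{2n+1}$ is birational onto its image on a $2n$-dimensional subvariety while not increasing the degree.
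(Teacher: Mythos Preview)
Your proof is correct and follows essentially the same route as the paper's: both use the uniform multiple $m_0$ from Corollary~\ref{cor:eff-bir-bound} (with $h^{1,1}(X)$ in place of $\rho(X)$), bound the degree of the birational image by $m_0^{2n}C$, project into a fixed projective space, and conclude via a finite-type parameter space. The only cosmetic differences are that the paper cites \cite[Lemma~2.2]{HM} for the degree bound instead of spelling out the volume computation, projects into $\PP^{4n+1}$ rather than $\PP^{2n+1}$, and invokes the Chow scheme rather than the Hilbert scheme of hypersurfaces.
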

\begin{proof}
By Corollary \ref{cor:eff-bir-bound} and \cite[Lemma 2.2]{HM} it follows that any projective irreducible symplectic varieties of dimension
$2n$, endowed with a big  line bundle $L$ having volume $\leq C$ is birational to a subvariety of a
projective space (which by projection we may assume to be $\mathbb P^{4n+1}$) of degree at most $m_0^{2n}C$, where 
$$
m_0=\frac{1}{2}(2n+2)(2n+3)(4\Card(A_X))^{h^{1,1}(X)-1} !
$$ 
is the integer given by (\ref{eq:eff}). 
Such subvarieties are parametrized by the points of an algebraic variety (the Chow scheme) and the result follows.
\end{proof}

\begin{remark}\label{rmk:ext}
It is interesting to notice that the proof of Corollary \ref{cor:eff-bir-bound}, hence of Corollary \ref{cor:bir-bound}, goes through for possibly singular projective irreducible symplectic varieties $X$ as soon as the Boucksom-Zariski decomposition holds for pseudoeffective divisors, Propositions \ref{prop:druel_prime} and \ref{prop:mark_prime}
hold for $X$ and the cone of $q_X$-nef classes lies inside the closure of the birational K\"ahler cone. The outcome is the same effective birationality result and birational boundedness for such varieties.  {\it Added in the revision process:} This is exactly what has been done in \cite[Theorem 1.1]{LMP3} for primitive symplectic varieties, using results from \cite{LMP2}. 
\end{remark}

\section*{Acknowledgments}
The fourth author would like to thank Ekaterina Amerik for very useful suggestions about Markman's results and for a nice discussion about the topic of the note and to John Lesieutre for discussions about the bounded negativity conjecture. We wish to thank Fran\c cois Charles for useful discussions on the effective birationality. We thank Christian Lehn for several discussions and suggestions about the singular case. We are also grateful to the anonymous referee for a careful reading of the paper and many valuable comments. The first author was supported by the Polish National Science Center project number 2013/10/E/ST1/00688. The third author  was partially supported by the Projet ANR-16-CE40-0008 ``Foliage''.
The fourth author was partially supported by the Foundation for Polish Science Start Scholarship 76/2018.

\end{document}